\newcommand{\leqnos}{\tagsleft@true\let\veqno\@@leqno}
\newcommand{\reqnos}{\tagsleft@false\let\veqno\@@eqno}
\definecolor{orange}{rgb}{1,0.5,0}
\definecolor{Red}{rgb}{.795,0.015,0.017}
\definecolor{Ggreen}{rgb}{0.,0.675,0.0128}
\definecolor{Bblue}{rgb}{0.16,.32,0.91}
\newcommand{\scrB}{\mathscr B}
\newcommand{\cI}{\mathcal I}
\newcommand{\cU}{\mathcal U}
\newcommand{\cV}{\mathcal V}
\newcommand{\cW}{\mathcal W}
\newcommand{\cT}{\mathcal T}
\newcommand*{\B}[1]{\ifmmode\bm{#1}\else\textbf{#1}\fi}
\newcommand{\bx}{\B{x}}
\newcommand{\bbeta}{\B{\beta}}
\newcommand{\balpha}{\B{\alpha}}
\newcommand{\bc}{\B{c}}
\def\CC{\mathbb{C}}
\def\RR{\mathbb{R}}
\def\NN{\mathbb{N}}
\def\ZZ{\mathbb{Z}}
\def\QQ{\mathbb{Q}}
\newcommand{\sdfrac}[2]{\mbox{\small$\displaystyle\frac{#1}{#2}$}}
\DeclareMathOperator*{\NP}{NP}
\DeclareMathOperator*{\SP}{SP}
\DeclareMathOperator*{\EP}{EP}
\DeclareMathOperator*{\WP}{WP}
\DeclareMathOperator{\distance}{\mathfrak{d}} 
\newcommand{\fd}{\distance}
\DeclareMathOperator*{\diam}{diam}
\DeclareMathOperator*{\Gal}{Gal}
\newcommand{\Tr}{\mathop{\mathrm{Tr}}\nolimits}
\newcommand{\abs}[1]{\left\vert #1 \right\vert}
\newcommand{\nnorm}[1]{\left|\hspace*{2.5pt} \!\!\left| #1\right|\hspace*{2.5pt} \!\!\right|}
\theoremstyle{plain}
\newtheorem{theorem}{Theorem}
\newtheorem{corollary}{Corollary}
\newtheorem{lemma}{Lemma}[section]
\theoremstyle{remark}
\newtheorem{remark}{Remark}[section]
\newtheorem*{remark*}{Remark}
\theoremstyle{definition}
\renewcommand*{\backref}[1]{}
\renewcommand*{\backrefalt}[4]{%
  \ifcase #1 %
No citations.
  \or
(page #2).%
  \else
(pages #2).%
  \fi%
}
\begin{document}

\title[Super-regular polytopes in cyclotomic hypercubes]
{Super-regular polytopes in cyclotomic hypercubes}

\author[C. Cobeli, A. Zaharescu]
{Cristian Cobeli, Alexandru Zaharescu}

\address[CC, AZ]{\normalfont 
``Simion Stoilow'' Institute of Mathematics of the Romanian Academy,~21 Calea Griviței Street, 
P. O. Box 1-764, Bucharest 014700, Romania}
\address[AZ]{\normalfont
Department of Mathematics,
University of Illinois at Urbana-Champaign,
Altgeld Hall, 1409 W. Green Street,
Urbana, IL, 61801, USA\vspace{7pt}}

\email{cristian.cobeli@imar.ro}  
\email{zaharesc@illinois.edu}  

\makeatletter
\@namedef{subjclassname@2020}{%
  \textup{2020} Mathematics Subject Classification}
\makeatother
\subjclass[2020]{Primary 11B99; Secondary 11P21, 52B11}

\keywords{Cyclotomic hypercubes, super-regular polytopes, Euclidean distance}

\begin{abstract}
For any odd prime $p$ and any integer $N\ge 0$, let $\cV(p,N)$ be the set of vertices
of the cyclotomic box $\scrB = \scrB(p,N)$ of edge size $2N$ and centered at the origin $O$ of the ring of integers $\ZZ[\omega]$
of the cyclotomic field $\QQ(\omega)$, 
where $\omega=\exp\big(\frac{2\pi i}{p}\big)$.
Cyclotomic boxes represented as sets of points in the complex plane prove to have counter-intuitive super-regularity properties that are known to occur in high dimensional real hypercubes.

Employing the naturally induced Euclidean-trace metric for distance  measurement and letting the prime $p$ tend to infinity, we prove the following results.
1. Almost all triangles with vertices in $\cV(p,N)$ are almost equilateral.
2. Almost all angles $\angle VOA$, where $V$ is in~$\cV(p,N)$, $O$ is the origin, which coincides with the center of $\scrB(p,N)$, and $A$ is fixed anywhere  in $\scrB(p,N)$, are right angles.
3. Almost all pyramids with base on $\cV(p,N)$ and the apex fixed anywhere in $\scrB(p,N)$
are super-regular, meaning that the base has all edges and diagonals almost equal and 
the lateral faces are nearly isosceles triangles, each nearly equal to the others.
\end{abstract}
\maketitle

\section{Introduction}
The study of discrete versions of classical problems in geometric measure 
theory, harmonic analysis, number theory, or probability 
has been a central point of research in recent decades.
For a brief selection of papers in the area of interest of this work, 
where the problems are investigated across different dimensions and 
underlying structures
see~\cite{OR2019, BP2009, ES1996, ISX2010, MMP1999, KPP2023, Shp2016}.
The discrete nature of these problems have found diverse 
practical applications 
in optimizing communication networks~\cite{Hae2013, SH2010},
data mining~\cite{AHK2001},
the design of neural networks~\cite{BS2021}, 
or statistical mechanics and string theory~\cite{Sel2021,ARR2018}.

Within multidimensional spaces, common sense-trained intuition can be tricked when faced with the truth. 
Instances where this can happen include phenomena similar to the one used
by Bubeck and Sellke~\cite{BS2021} to argue the necessity 
for neural networks to be larger than previously expected in order to 
avoid fundamental mishaps.
Their proof is based on the observation that almost all pairs of points 
placed on a high-dimensional sphere are almost a diameter apart 
from each other. 
Related phenomena are also known to occur in high-dimensional comparable-size normalized hypercubes.
There, despite the huge number of triangular shapes, which grows fast 
as the dimension increases, it is found that almost all triangles are 
almost equilateral~\cite{ACZ2024,ACZ2023}.

In this paper, we show that this phenomenon 
can be encountered concealed even in two dimensions, 
within a complex plane setting.
Let $p$ be an odd prime and let $\QQ(\omega)$
denote the cyclotomic field generated by the 
primitive $p$-th root of unity $\omega=\exp\big(\frac{2\pi i}{p}\big)$. 
For a positive integer~$N$, we denote by $\scrB(p,N)$ the 
symmetric \textit{box of cyclotomic lattice points}
\begin{equation*}
    \scrB(p,N):=\big\{a_1\omega+\cdots+a_{p-1}\omega^{p-1}
    : a_1,\dots,a_{p-1}\in [-N,N]\cap\ZZ
    \big\}.
\end{equation*}
The vertices of $\scrB(p,N)$ are in the set
\begin{equation*}
    \cV(p,N):=\big\{a_1\omega+\cdots+a_{p-1}\omega^{p-1}
    : a_1,\dots,a_{p-1}\in \{-N,N\}
    \big\}.
\end{equation*}
Notice that the size of  these sets increase exponentially with $p$ 
since \mbox{$\#\scrB(p,N)=(2N+1)^{p-1}$} and $\#\cV(p,N)=2^{p-1}$.
An indication of how these \textit{cyclotomic hypercubes} look like can be seen 
in Figures~\ref{FigureRandomPolygons} and~\ref{FigurePoles13-16}.
For any $\gamma\in\QQ(\omega)$, 
after its successive rotation through the elements of the 
$\omega$-basis $\omega,\dots,\omega^{p-1}$, we let
\begin{align*}
  \psi(\gamma):=\big(\Tr(\gamma\omega),\dots,
\Tr(\gamma\omega^{p-1})\big)
\end{align*}
be the\textit{ trace-map image} of~$\gamma$ 
in the vector space $\QQ^{p-1}$.
Then, let $d(\alpha,\beta)$ be the \textit{cyclotomic distance} between 
 $\alpha$ and $\beta$, which is defined as the 
the Euclidean distance between their $\psi$-embeddings, that is, 
\begin{align}\label{eqDistanceTrace}
  d(\alpha,\beta):=\nnorm{\psi(\beta)-\psi(\alpha)}_E.  
\end{align} 
This cyclotomic distance, referred to henceforth briefly as distance, is a suitable distance in~$\QQ(\omega)$, 
which finely complies with the algebraic-arithmetic structure.
Then the associated norm of $\alpha\in\QQ(\omega)$, which is the distance from $\alpha$ to the origin $O$, 
can be nicely expressed (see Lemma~\ref{LemmaNormFormula})
in terms of the trace of $\alpha$
and the Euclidean norm of $\balpha\in\QQ^{p-1}$,
the vector determined by
the coefficients of $\alpha$ in the $\omega$-basis.
What is noteworthy is that while the properties of the norm are preserved for any $p$, the contribution of the trace term 
in the norm of a vetrex $\alpha\in\cV(p,N)$ diminishes 
towards zero as $p$ increases.
This is happening because almost all vertices are balanced, 
meaning they have approximately equal numbers of components 
equal to  $N$ or $-N$. 
The reason lies in the combinatorial property of binomial coefficients, 
which clusters their weight around their middle
(for a further in-depth discussion see the proof and an application of the argument to establish a property of equidistribution on the rays cutting through the Proth-Gilbreath triangles in the proof of~\cite[Theorem 5]{BCZ2024}).

Next, we scale the cyclotomic distance so that the points furthest apart in $\scrB(p,N)$ 
are at a distance of exactly~$1$ from each other.
This gives us the \textit{normalized distance}
\begin{align}\label{eqDeltaTrace}
  \distance_{p,N}(\alpha,\beta):=
  \sdfrac{1}{\diam(\scrB(p,N)}d(\alpha,\beta),
\end{align}
which additionally serves as a unitary means of comparing 
the spacing of points in different hypercubes~$\scrB(p,N)$.

\begin{figure}[htb]
 \centering
     \includegraphics[width=0.476\textwidth]{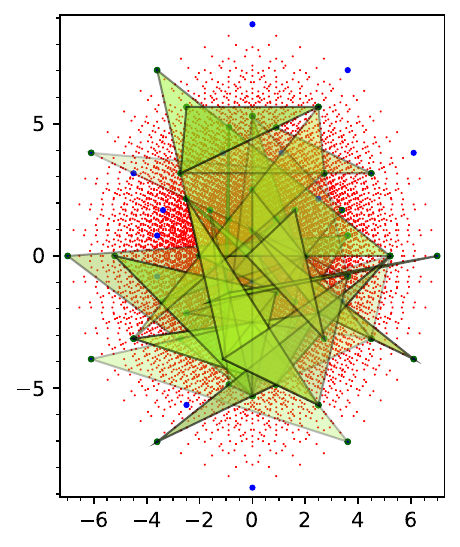}
    \includegraphics[width=0.51\textwidth]{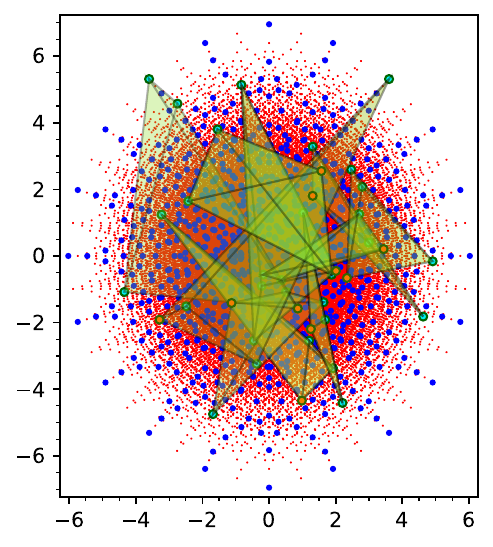}
\caption{
In the representation on the left-side, there are
twenty-six random triangles with vertices in $\cV(7,2)$, and 
on the image on the right-side there are ten pyramids with triangular bases with vertices in $\cV(11,1)$ and the apex in $\scrB(11,1)$.
The apexes of the pyramids and 
the vertices in $\cV(p,N)$ are shown 
as thicker in size compared to the rest of points in $\scrB(p,N)$,
and the vertices in $\cV(p,N)$ are also
colored differently (blue instead of red, in the electronic version).} 
\label{FigureRandomPolygons}
 \end{figure}

\subsection{The main results}\label{SubsectionMainResults}
We outline here our main findings derived from exploring topics that include: the evaluation of the distances between an arbitrary $\alpha\in\scrB(p,N)$
and the vertices $\beta\in\cV(p,N)$,
finding the common size of the central angle 
$\widehat{\alpha O\beta}$, and
measuring  distances between vertices.

First we point out that almost all triangles 
$\triangle=(\beta_1,\alpha,\beta_2)$
with $\beta_1,\beta_2\in\cV(p,N)$ 
and $\alpha$  anywhere in $\scrB(p,N)$
are almost isosceles.
\begin{theorem}\label{TheoremIsosceles}
For any $\varepsilon>0$, there exists a prime $p_\varepsilon$ such that for all primes $p\geq p_\varepsilon$,
for all integers $N\geq 1$, and for every point $\alpha\in\scrB(p,N)$, 
the proportion of triangles 
\mbox{$(\beta_1,\alpha,\beta_2)$}
such that
\begin{equation*}
  \abs{\distance_{p,N}(\alpha,\beta_1)-\distance_{p,N}(\alpha,\beta_2)}\leq\varepsilon,
\end{equation*}
where $\beta_1,\beta_2\in\cV(p,N)$,
is greater than or equal 
to $1-\varepsilon$.
\end{theorem}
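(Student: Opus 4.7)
The plan is to combine the explicit norm formula provided by Lemma~\ref{LemmaNormFormula} with a second–moment argument applied to the sign vector that parametrizes a random vertex of $\cV(p,N)$. Writing $\alpha=\sum_{i=1}^{p-1} a_i\omega^i \in \scrB(p,N)$ and a generic vertex $\beta=\sum_{i=1}^{p-1} s_iN\omega^i$ with $s_i\in\{-1,1\}$, the lemma applied to $\beta-\alpha$ leads to a clean decomposition
\begin{equation*}
 d(\alpha,\beta)^2 \;=\; C(\alpha) \;-\; 2p^{2}N\,X(\B{s}) \;-\; (p+1)N^{2}\,T(\B{s})^{2} \;-\; 2(p+1)N\,T(\B{s})\Tr(\alpha),
\end{equation*}
where $C(\alpha)$ depends only on $\alpha$, $p$, $N$, while $X(\B{s})=\sum_{i}s_i a_i$ and $T(\B{s})=\sum_{i}s_i$ are low–degree polynomials in the signs.

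The crucial step is to show that, as $\B{s}$ ranges uniformly over $\{-1,1\}^{p-1}$, the normalized quantity $\distance_{p,N}(\alpha,\beta)^{2}$ is sharply concentrated around its mean $\mu_\alpha$. Direct moment computations give $E[X]=0$, $\mathrm{Var}(X)=\|\B{a}\|^{2}$, $E[T^{2}]=p-1$, $\mathrm{Var}(T^{2})=2(p-1)(p-2)$, combined with the Cauchy--Schwarz bound $\Tr(\alpha)^{2}\leq (p-1)\|\B{a}\|^{2}\leq (p-1)^{2}N^{2}$. After normalizing by $\diam(\scrB(p,N))^{2}=4p^{2}(p-1)N^{2}$, each of the three fluctuating terms contributes $O(1/p)$ to the variance, so that
\begin{equation*}
 \mathrm{Var}_{\beta}\!\big(\distance_{p,N}(\alpha,\beta)^{2}\big)=O(1/p)
\end{equation*}
uniformly in $\alpha$; the same Cauchy--Schwarz estimate also keeps the mean bounded away from zero, yielding $\mu_\alpha\geq 1/4 - O(1/p)$.

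Chebyshev's inequality then implies that for every $\eta>0$ the proportion of vertices $\beta$ with $|\distance_{p,N}(\alpha,\beta)^{2}-\mu_\alpha|>\eta$ is $O(1/(p\eta^{2}))$. By a union bound, for all but a $O(1/(p\eta^{2}))$ fraction of ordered pairs $(\beta_1,\beta_2)\in\cV(p,N)^{2}$ both normalized squared distances lie within $\eta$ of $\mu_\alpha$, hence $|\distance_{p,N}(\alpha,\beta_1)^{2}-\distance_{p,N}(\alpha,\beta_2)^{2}|\leq 2\eta$. Since $\distance_{p,N}(\alpha,\beta_k)\geq \sqrt{\mu_\alpha-\eta}\geq 1/3$ for $p$ large enough, the identity $|d_1-d_2|=|d_1^{2}-d_2^{2}|/(d_1+d_2)$ converts this into $|\distance_{p,N}(\alpha,\beta_1)-\distance_{p,N}(\alpha,\beta_2)|\leq 3\eta$. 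Taking $\eta=\varepsilon/3$ and $p_\varepsilon$ large enough that $O(1/(p\eta^{2}))<\varepsilon$ completes the argument.

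The main obstacle I expect is securing the variance bound \emph{uniformly} in $\alpha$. The cross term $N\,T\,\Tr(\alpha)$ could a priori blow up when $\Tr(\alpha)$ is extremal, for instance near $\alpha=\pm N$, which in fact lie very close to the origin in the trace metric even though they appear far in the $\omega$-basis. The Cauchy--Schwarz estimate $\Tr(\alpha)^{2}\leq (p-1)\|\B{a}\|^{2}$ is what tames this contribution and, at the same time, prevents $\mu_\alpha$ from shrinking, so the concentration step and the lower bound needed to pass from squared distances back to distances both survive in the same regime.
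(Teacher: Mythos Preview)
Your proposal is correct and follows essentially the same route as the paper: decompose $d(\alpha,\beta)^2$ via Lemma~\ref{LemmaNormFormula}, establish a uniform $O(1/p)$ bound on the variance of $\distance_{p,N}^2(\alpha,\beta)$ over $\beta\in\cV(p,N)$, apply Chebyshev, and pass to pairs. The paper carries this out through Lemmas~\ref{LemmaAverage} and~\ref{Lemma2Moment} (exact mean and exact second moment), then Theorem~\ref{Theorem4} and Theorem~\ref{TheoremIsoscelesTriangles}; your moment computations of $X$, $T$ and $T^{2}$ reproduce Lemma~\ref{LemmaAverage} exactly and replace the lengthy exact variance formula of Lemma~\ref{Lemma2Moment} by a direct term-by-term $O(1/p)$ bound, which is all that is needed here. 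One small point worth tightening: the three fluctuating pieces are not independent (in particular $\mathrm{Cov}(X,T)=-\Tr(\alpha)$), so either note that the crude inequality $\mathrm{Var}(A+B+C)\le 3(\mathrm{Var}(A)+\mathrm{Var}(B)+\mathrm{Var}(C))$ suffices, or check the cross covariance explicitly; with $\Tr(\alpha)^2\le (p-1)^2N^2$ it is again $O(1/p)$ after normalization, so nothing changes.
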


Next we show that for almost all $\alpha\in\scrB(p,N)$,
nearly all angles $\widehat{\alpha O\beta}$ with 
$\beta\in\cV(p,N)$ are close to right angles.

\begin{theorem}\label{TheoremAngle}
For any $\varepsilon>0$, there exists a prime $p_\varepsilon$
and a function $f(p)\leq1/2$
that tends to zero as $p$ tends to infinity, 
such that, for all
primes $p\geq p_\varepsilon$, for all integers $N\ge 1$, 
and for all $\alpha\in\scrB(p,N)$ with 
\mbox{$\distance_{p,N}(\alpha,O)\geq f(p)$} (where
$O$ is the origin), the proportion of
angles~$\widehat{\alpha O\beta}$,
with \mbox{$\beta\in\cV(p,N)$}, for which 
\begin{equation*}
  \mid \cos (\widehat{\alpha O \beta})\mid \leq \varepsilon,
\end{equation*} 
is greater than or equal to $1-\varepsilon$.
\end{theorem}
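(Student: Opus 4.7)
The plan is to combine a variance estimate on the trace-map inner product with a Hoeffding bound on the coefficient sum of a vertex, showing that a typical vertex $\beta$ is nearly orthogonal to the fixed point $\alpha$.

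First I would write $\cos(\widehat{\alpha O\beta}) = \langle\psi(\alpha),\psi(\beta)\rangle/(\|\psi(\alpha)\|\,\|\psi(\beta)\|)$ and, using Lemma~\ref{LemmaNormFormula} together with the identity $\Tr(\omega^k)=-1$ for $1\le k\le p-1$, derive the polarised formula
\[
\langle\psi(\alpha),\psi(\beta)\rangle \,=\, p^{2}\langle\balpha,\bbeta\rangle \,-\, (p+1)\,S(\alpha)\,S(\beta),
\]
where $S(\gamma):=c_1+\cdots+c_{p-1}$ denotes the sum of the $\omega$-coefficients of $\gamma=\sum_i c_i\omega^i$. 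Every vertex $\beta\in\cV(p,N)$ has the form $\beta = N\sum_i \epsilon_i\,\omega^i$ with $\epsilon_i\in\{-1,1\}$, so the numerator becomes the Rademacher sum $N\sum_{i=1}^{p-1}\epsilon_i\bigl(p^{2}a_i - (p+1)S(\alpha)\bigr)$.

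Next, averaging its square over the $2^{p-1}$ choices of $\epsilon$ and using $\sum_\epsilon \epsilon_i\epsilon_j = 2^{p-1}\delta_{ij}$ gives the clean second moment
\[
\frac{1}{\#\cV(p,N)}\sum_{\beta\in\cV(p,N)}\langle\psi(\alpha),\psi(\beta)\rangle^{2} \,=\, N^{2}\bigl(p^{2}\|\psi(\alpha)\|^{2} - (p+1)S(\alpha)^{2}\bigr) \,\le\, N^{2}p^{2}\|\psi(\alpha)\|^{2}.
\]
Simultaneously, setting $T=\sum_i\epsilon_i$ one has $\|\psi(\beta)\|^{2} = N^{2}\bigl(p^{2}(p-1)-(p+1)T^{2}\bigr)$, and Hoeffding's inequality applied to $T$ shows that the balanced subset $\cV' := \{\beta\in\cV(p,N) : |T|\le p^{3/4}\}$ exhausts a fraction $1-2\exp(-\tfrac12 p^{1/2})$ of $\cV(p,N)$, on which $\|\psi(\beta)\|^{2}\ge \tfrac12 N^{2}p^{2}(p-1)$.

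Combining the two bounds via Markov's inequality restricted to $\cV'$ yields
\[
\frac{\#\{\beta\in\cV' : |\cos(\widehat{\alpha O\beta})|>\varepsilon\}}{\#\cV(p,N)} \,\le\, \frac{1}{\varepsilon^{2}}\cdot\frac{N^{2}p^{2}\|\psi(\alpha)\|^{2}}{\|\psi(\alpha)\|^{2}\cdot\tfrac12 N^{2}p^{2}(p-1)} \,=\, \frac{2}{\varepsilon^{2}(p-1)},
\]
which, together with the exponentially small unbalanced fraction, is $\le\varepsilon$ once $p\ge p_\varepsilon$. The hypothesis $\distance_{p,N}(\alpha,O)\ge f(p)$ enters only to guarantee $\|\psi(\alpha)\|\ne 0$, so that the cosine is well defined and the $\|\psi(\alpha)\|^{2}$ factor legitimately cancels in the last display; one may take $f(p)=1/p$, or any slow-to-zero function. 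The main technical obstacle is precisely the denominator $\|\psi(\beta)\|$, since the two diagonal vertices with all $\epsilon_i$ equal have $\|\psi(\beta)\|^{2}=N^{2}(p-1)$, a factor of $p^{2}$ below the typical value, so a naive Cauchy--Schwarz bound there would wipe out the estimate; isolating these and all other unbalanced vertices by the Hoeffding tail on $T$ is therefore essential, and the tail is exponentially small and easily absorbed into the $\varepsilon$ budget.
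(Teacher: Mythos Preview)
Your argument is correct and takes a genuinely different route from the paper's.

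The paper proceeds via the law of cosines: it first uses Lemma~\ref{Lemma2Moment} and the resulting Theorem~\ref{Theorem4} to show that for most $\beta\in\cV(p,N)$ both $\distance_{p,N}^2(\alpha,\beta)$ and $\distance_{p,N}^2(O,\beta)$ are within $O(p^{-\eta})$ of their averages, then plugs these into $|\cos(\widehat{\alpha O\beta})| = \tfrac12\bigl|\distance^2(O,\alpha)+\distance^2(O,\beta)-\distance^2(\alpha,\beta)\bigr|/\bigl(\distance(O,\alpha)\distance(O,\beta)\bigr)$. The residual error picks up a factor $\distance_{p,N}(O,\alpha)^{-1}$, which is why the paper needs the lower bound $\distance_{p,N}(O,\alpha)\ge f(p)=p^{\gamma-\eta}$ in a substantive way (Theorem~\ref{TheoremRightAngles}).

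You bypass the law of cosines by working with the inner product directly. The crucial observation is that the second moment of $\langle\psi(\alpha),\psi(\beta)\rangle$ over $\beta$ factors as $N^2\bigl(p^2\|\psi(\alpha)\|^2-(p+1)S(\alpha)^2\bigr)\le N^2p^2\|\psi(\alpha)\|^2$, so after Chebyshev the $\|\psi(\alpha)\|^2$ in numerator and denominator of $\cos^2$ cancel exactly. This makes your bound uniform in $\alpha\ne O$, and the condition $\distance_{p,N}(\alpha,O)\ge f(p)$ becomes purely cosmetic, as you note. For the $\|\psi(\beta)\|$ denominator you invoke Hoeffding on $T=\sum_i\epsilon_i$, whereas the paper achieves the same control implicitly through the distance-concentration estimate $\distance_{p,N}^2(O,\beta)=\tfrac14+O(p^{-\eta})$. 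Both are equivalent in strength here; your version is more self-contained, while the paper's reuses machinery already built for the other theorems. In summary: your approach is sharper for this particular statement (no genuine restriction on $\alpha$), while the paper's fits more seamlessly into its overall distance-moment framework.
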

An explicit expression for the function $f(p)$
from Theorem~\ref{TheoremAngle},
whose role is to keep triangles not too close to being degenerate, is given in 
Theorem~\ref{TheoremRightAngles}.

The following
is a super-regularity statement on the vertices of the box $\scrB(p,N)$, proving that the exceptional set of 
segments whose normalized lengths are not in the vicinity 
of~$1/\sqrt{2}$
becomes arbitrarily small as $p$ approaches infinity.
\begin{theorem}\label{TheoremMaineps}
There exist absolute positive constants $C_{1}$ and $C_2$ such that for all $\varepsilon\in (0,1)$, 
all integers $N\ge 1$, and all prime $p>C_1/\varepsilon$, 
we have
\begin{equation}\label{eqTheoremMaineps}
	\sdfrac{1}{\#\cV(p,N)^2}   
    \#\Big\{(\beta_1,\beta_2)\in\cV(p,N)^2 : 
        \Big|\distance_{p,N}(\beta_1,\beta_2)-\sdfrac{1}{\sqrt{2}}\Big|
        \ge \varepsilon\Big\}
        \le  \sdfrac{C_2}{\varepsilon^2p}\,.
\end{equation}
\end{theorem}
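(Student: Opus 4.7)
The plan is to view the pair $(\beta_1,\beta_2)$, sampled uniformly from $\cV(p,N)^2$, as a random object and to carry out a second-moment analysis of $\distance_{p,N}(\beta_1,\beta_2)^2$. Writing $\beta_j=\sum_{i=1}^{p-1}a_i^{(j)}\omega^i$ with the $a_i^{(j)}\in\{-N,N\}$ independent and uniform, the coefficients $c_i:=a_i^{(1)}-a_i^{(2)}$ of $\gamma:=\beta_1-\beta_2$ in the $\omega$-basis become i.i.d., taking the value $0$ with probability $1/2$ and the values $\pm 2N$ with probability $1/4$ each. After the rescaling $X_i:=c_i/(2N)\in\{-1,0,1\}$, one has $E[X_i]=0$, $E[X_i^2]=1/2$, and $\operatorname{Var}(X_i^2)=1/4$.

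Invoking Lemma~\ref{LemmaNormFormula}, the squared distance $d(\beta_1,\beta_2)^2=d(\gamma,O)^2$ is expressed explicitly as a linear combination of $\|\bc\|_E^2$, with $\bc=(c_1,\dots,c_{p-1})$, and the squared trace $\Tr(\gamma)^2=\bigl(\sum_i c_i\bigr)^2$. In parallel, $\diam(\scrB(p,N))$ is realized between two opposite vertices whose $\omega$-basis coordinate differences are all $\pm 2N$ and sum to zero---a configuration available because $p-1$ is even---giving a value proportional to $p^2(p-1)N^2$ for $\diam(\scrB(p,N))^2$. Dividing produces the identity
\begin{equation*}
\distance_{p,N}(\beta_1,\beta_2)^2 \;=\; \frac{Y}{p-1}\;-\;\frac{(p+1)Z^2}{p^2(p-1)}, \qquad Y:=\sum_{i=1}^{p-1}X_i^2, \quad Z:=\sum_{i=1}^{p-1}X_i.
\end{equation*}
The first term has mean $1/2$ while the second is non-negative with expectation $O(1/p)$, so $E[\distance_{p,N}(\beta_1,\beta_2)^2]=1/2-O(1/p)$.

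The concentration step is via Chebyshev: since $Y$ is a sum of i.i.d.\ bounded random variables, $\operatorname{Var}(Y/(p-1))=1/(4(p-1))$, whence $\Pr[|Y/(p-1)-1/2|\ge t]\le 1/(4(p-1)t^2)$. Markov's inequality applied to the non-negative correction bounds $\Pr[(p+1)Z^2/(p^2(p-1))\ge t]$ by $O(1/(pt))$. A union bound at threshold $\varepsilon/2$ then controls $\Pr[|\distance_{p,N}(\beta_1,\beta_2)^2-1/2|\ge\varepsilon]$ by $O\bigl(1/(p\varepsilon^2)+1/(p\varepsilon)\bigr)$, and the hypothesis $p>C_1/\varepsilon$ absorbs the second summand into the first, yielding a $C/(p\varepsilon^2)$ bound.

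The final passage from squared distance to distance uses the factorization $\distance^2-1/2=(\distance-1/\sqrt{2})(\distance+1/\sqrt{2})$ together with the trivial lower bound $\distance+1/\sqrt{2}\ge 1/\sqrt{2}$, so that $|\distance-1/\sqrt{2}|\ge\varepsilon$ forces $|\distance^2-1/2|\ge\varepsilon/\sqrt{2}$; substituting merely absorbs an absolute factor into the final constant $C_2$. The only delicate step is the linear-in-$p$ variance bound on $Y$, which is routine because the $X_i^2$ are i.i.d.\ and bounded; the remaining effort is bookkeeping to exhibit absolute constants $C_1$ and $C_2$ matching the statement.
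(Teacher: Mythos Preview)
Your argument is correct and follows the same second-moment/Chebyshev template as the paper. The paper (Section~5.4 together with Lemmas~\ref{LemmaAverageTwoVertices}--\ref{LemmaMomentMVV}) computes the exact mean $A_{p,N}(\cV,\cV)=\tfrac12(1-p^{-1}-p^{-2})$ and exact variance $M_{p,N}(\cV,\cV)=\tfrac{1}{4(p-1)}(1-p^{-2}-4p^{-3}-3p^{-4})$ of $\distance_{p,N}^2(\beta_1,\beta_2)$ by brute-force expansion of multinomial sums over $\cV\times\cV$, applies Chebyshev centered at $\sqrt{A_{p,N}(\cV,\cV)}$, and only then invokes $p>C_1/\varepsilon$ to shift the center to $1/\sqrt2$. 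Your decomposition $\distance_{p,N}^2=Y/(p-1)-(p+1)Z^2/(p^2(p-1))$, exploiting that the $X_i=c_i/(2N)$ are i.i.d., is a cleaner packaging of the same idea: handling $Y$ by Chebyshev and the nonnegative trace correction by Markov bypasses the lengthy exact expansions of Section~\ref{SectionAMVV}. One small inaccuracy worth noting: the absorption of the $O(1/(p\varepsilon))$ Markov term into $O(1/(p\varepsilon^2))$ does not actually require the hypothesis $p>C_1/\varepsilon$---it follows immediately from $\varepsilon<1$. In fact your argument yields \eqref{eqTheoremMaineps} for every odd prime $p$, slightly stronger than stated; in the paper's route the hypothesis $p>C_1/\varepsilon$ is genuinely needed, because $\sqrt{A_{p,N}(\cV,\cV)}$ differs from $1/\sqrt2$ by a quantity of order~$1/p$.
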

Note that for a non-trivial result in~\eqref{eqTheoremMaineps}, we need to take 
$p\gg 1/\varepsilon^2$.

\smallskip

By repeatedly applying Theorem~\ref{TheoremMaineps},
we derive a statement that contains it and is at the same time more general.

\begin{corollary}\label{CorollaryKeps}
There exist absolute positive constants $C_{1}$ and $C_2$ such that for any $\varepsilon\in (0,1)$, 
all integers $N\ge 1$, all integers $K\ge 2$, and
all primes $p>C_1/\varepsilon$, 
we have
\begin{equation*}
  \begin{split}
     	\sdfrac{1}{\#\cV(p,N)^{K}}   
    \#\Big\{(\beta_1,\dots,\beta_K)\in\cV(p,N)^K : &
        \max_{1\le j<k\le K}\Big|\distance_{p,N}(\beta_j,\beta_k)-\sdfrac{1}{\sqrt{2}}\Big|
 \le \varepsilon\Big\}        
        \ge  1-\sdfrac{C_{2}K^2}{\varepsilon^2 p}\,. 
  \end{split}
\end{equation*}
\end{corollary}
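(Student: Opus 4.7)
The plan is to derive Corollary \ref{CorollaryKeps} directly from Theorem \ref{TheoremMaineps} by a union bound over the $\binom{K}{2}$ pairs of indices, so essentially no new geometric or arithmetic input will be needed.

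First I would fix a pair of indices $1\le j<k\le K$ and observe that if $(\beta_1,\dots,\beta_K)$ is chosen uniformly from $\cV(p,N)^K$, then the marginal distribution of the pair $(\beta_j,\beta_k)$ on $\cV(p,N)^2$ is uniform, since the remaining $K-2$ coordinates simply sum out. Therefore Theorem \ref{TheoremMaineps} applies verbatim to give
\begin{equation*}
   \sdfrac{1}{\#\cV(p,N)^{K}}
   \#\Big\{(\beta_1,\dots,\beta_K)\in\cV(p,N)^K :
        \Big|\distance_{p,N}(\beta_j,\beta_k)-\sdfrac{1}{\sqrt{2}}\Big|\ge \varepsilon\Big\}
        \le \sdfrac{C_2}{\varepsilon^2 p},
\end{equation*}
provided $p>C_1/\varepsilon$, with the same absolute constants $C_1,C_2$ as in Theorem~\ref{TheoremMaineps}.

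Next I would apply the union bound. The event that $\max_{1\le j<k\le K}|\distance_{p,N}(\beta_j,\beta_k)-1/\sqrt{2}|>\varepsilon$ is the union of the $\binom{K}{2}$ events above, so its measure is bounded by $\binom{K}{2}\cdot C_2/(\varepsilon^2 p)\le C_2 K^2/(\varepsilon^2 p)$. Passing to the complementary event yields the asserted lower bound $1-C_2K^2/(\varepsilon^2 p)$, with the same constants $C_1,C_2$ (possibly after absorbing the factor $1/2$ from $\binom{K}{2}=K(K-1)/2$ into $C_2$).

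There is no real obstacle here; the only thing to check carefully is the invariance of the uniform measure on $\cV(p,N)^K$ under projection onto any two coordinates, and the fact that the bound in Theorem~\ref{TheoremMaineps} is uniform in the pair considered, so it can be summed over the $O(K^2)$ pairs without loss. The statement is therefore essentially a corollary in the most literal sense: Theorem~\ref{TheoremMaineps} supplies the pairwise estimate, and a union bound inflates it by a factor $K^2/2$.
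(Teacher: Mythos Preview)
Your argument is correct and matches the paper's approach: the paper also obtains the corollary by applying the pairwise estimate of Theorem~\ref{TheoremMaineps} to each of the $\binom{K}{2}$ edges and letting the error grow proportionally to the number of pairs, i.e., a union bound. The only cosmetic difference is that the paper routes this through the intermediate Theorem~\ref{TheoremKVV} stated in terms of a parameter $T$ before substituting $T=1/\varepsilon$, but the underlying mechanism is identical.
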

It should be noted that for a non-trivial result in  Corollary~\ref{CorollaryKeps}, we need to take 
$p\gg K^2/\varepsilon^2$.

Let us remark that if $p$ increases while $N$ may not necessarily change, 
the densities of both $\scrB(p,N)$ and $\cV(p,N)$, as sets of complex numbers,
increase much faster than their diameter.
In order to illustrate the resulting variety of  polytopes 
with vertices based on~$\cV(p,N)$, we show in Figure~\ref{FigureRandomPolygons} 
two such sets of $K$-polytopes with $K = 3$.
In the right-hand side figure,
the vertices of the tuples are linked to a fourth point $\alpha\in\scrB(p,N)$, forming a `pyramid' together.
Nevertheless, Corollary~\ref{CorollaryKeps} proves that 
the semblance of variety
is an appearance that changes as $p$ increases, 
and the wide variety of $K$-polytope types is completely overwhelmed by those that have all edges almost equal.
In other words, if we call \textit{super-regular} any polytope 
with all edges almost equal, then Corollary~\ref{CorollaryKeps}
says that if $K\ge 2$, then almost all $K$-polytopes with vertices in $\cV(p,N)$ are
super-regular.

\smallskip

The result in Corollary~\ref{CorollaryKeps} falls under the category of 
\textit{unit-distance} problem (see Erd\H{o}s~\cite{Erd1946}, 
Oberlin and Oberlin~\cite{OO2015} and the references therein).
Likewise, putting together distances of a kind, 
we find here the classic \textit{integer-set} problem
(see Anning and Erd\H{o}s~\cite{Erd1945, AE1945}, 
Iosevich, Shparlinski, and Xiong~\cite{ISX2010}, 
Greenfeld, Iliopoulou and Peluse~\cite{GIP2024}).
Although such sets in the $\RR^2$ plane can be infinite and intricate,
by distinction with the `almost all' result in Corollary~\ref{CorollaryKeps}, 
it is recently proven in~\cite{GIP2024} that
an integer subset of $[-N,N]^2$ can contain at most 
$O\big(\log^{O(1)}N\big)$ points that do not lie 
on a single straight line or on a circle.

\smallskip

Our results can be summarized in the following 
unified form in which we also make precise the shape of the polytopes as obtained in Theorem~\ref{TheoremKVV}.

\begin{corollary}\label{CorollaryPyramids}
Let $K\ge 2$ and $N\ge 1$ be integers and let $p$ a sufficiently large prime.
Then almost all pyramids with apex $\alpha\in\scrB(p,N)$
and $K$-polytopal bases $(\beta_1,\dots,\beta_K)\in\cV(p,N)^K$
are almost regular, and the normalized size of the 
edges and diagonals of the bases are all almost equal 
with $1/\sqrt{2}$.
Additionally, if the apex of the pyramid is close to the origin, then all lateral faces 
are almost isosceles triangles and also nearly right-angled triangles with edges $1/2,1/2,1/\sqrt{2}$.
\end{corollary}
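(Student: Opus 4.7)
The plan is to stitch together Corollary~\ref{CorollaryKeps}, Theorem~\ref{TheoremIsosceles}, and the elementary triangle-geometry fact that edge lengths close to $(1/2,1/2,1/\sqrt{2})$ force near right-angledness at the vertex opposite the long edge. The super-regularity of the base $(\beta_1,\dots,\beta_K)\in\cV(p,N)^K$ with common normalized edge $1/\sqrt{2}$ is exactly what Corollary~\ref{CorollaryKeps} delivers: for $p$ large and $\varepsilon_1$ small, outside an exceptional set of density $O(K^2/(\varepsilon_1^2 p))$, every one of the $\binom{K}{2}$ pairwise distances $\distance_{p,N}(\beta_j,\beta_k)$ lies in $[1/\sqrt{2}-\varepsilon_1,\,1/\sqrt{2}+\varepsilon_1]$.

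For the lateral edges, I would fix the apex $\alpha\in\scrB(p,N)$ and apply Theorem~\ref{TheoremIsosceles} to each of the $\binom{K}{2}$ index pairs with threshold $\varepsilon_2$; a union bound then shows that outside an exceptional set of $K$-tuples of density at most $\binom{K}{2}\varepsilon_2$, all $K$ lateral lengths $\distance_{p,N}(\alpha,\beta_j)$ are within $(K-1)\varepsilon_2$ of a common value $L(\alpha)$. Intersecting with the Step~1 event already yields the general almost-regular claim. To identify $L(\alpha)\approx 1/2$ when the apex is near the origin, I would rely on the combinatorial balance noted after~\eqref{eqDistanceTrace}: since almost every vertex $\beta\in\cV(p,N)$ is balanced, so that $|\Tr(\beta)|=O(\sqrt{p})$ by concentration of $\pm 1$-sums, Lemma~\ref{LemmaNormFormula} forces the cyclotomic norm of $\beta$ to be asymptotic to the $\omega$-basis Euclidean norm $N\sqrt{p-1}$, and a parallel computation of $\diam(\scrB(p,N))$ gives $\distance_{p,N}(O,\beta)\to 1/2$ for almost all vertices. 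The triangle inequality then transfers this to $\distance_{p,N}(\alpha,\beta)\approx 1/2$ whenever $\distance_{p,N}(\alpha,O)$ is small.

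Finally, each lateral face $\triangle\beta_j\alpha\beta_k$ has edges of normalized lengths approximately $(1/2,1/2,1/\sqrt{2})$, and the law of cosines gives
\begin{equation*}
\cos\angle\beta_j\alpha\beta_k\approx\frac{\tfrac14+\tfrac14-\tfrac12}{2\cdot\tfrac14}=0,
\end{equation*}
quantitatively $|\cos\angle\beta_j\alpha\beta_k|=O(\varepsilon)$, which is the claimed near right angle at the apex. Tuning $\varepsilon_1,\varepsilon_2$ and the radius $\eta$ around $O$ as small multiples of the target $\varepsilon$ and taking $p$ sufficiently large finishes the proof. The main obstacle I anticipate is the quantitative form of Step~3: the qualitative fact that the trace term is negligible must be made precise enough to pin $\distance_{p,N}(O,\beta)$ to exactly $1/2$ with explicit control of the exceptional vertices, and its cost must be matched against the exact value of $\diam(\scrB(p,N))$ extracted from Lemma~\ref{LemmaNormFormula}. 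This is the one ingredient not already packaged in the stated theorems.
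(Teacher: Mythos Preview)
Your outline is correct in spirit and matches the paper's assembly of Corollary~\ref{CorollaryKeps} (for the base) with Theorem~\ref{TheoremIsosceles} (for the lateral faces), followed by the law of cosines to extract the near-right angle at the apex. The one place where you diverge is your Step~3, and precisely the ``obstacle'' you flag is already handled cleanly by machinery the paper builds for this purpose.

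Rather than arguing separately that almost every vertex $\beta$ is balanced so that $|\Tr(\beta)|=O(\sqrt{p})$ and then feeding this into Lemma~\ref{LemmaNormFormula}, the paper computes the first two moments of $\distance_{p,N}^2(\alpha,\beta)$ over $\beta\in\cV(p,N)$ exactly: Lemma~\ref{LemmaAverage} gives
\[
A_{p,N}(\alpha,\cV)=\distance_{p,N}^2(O,\alpha)+\tfrac14-\tfrac14p^{-1}-\tfrac14p^{-2},
\]
and Lemma~\ref{Lemma2Moment} bounds the variance by $O(1/p)$. Chebyshev (packaged as Theorem~\ref{Theorem4}) then forces $\distance_{p,N}(\alpha,\beta)$ to concentrate at $\sqrt{\distance_{p,N}^2(O,\alpha)+1/4}$ for almost all $\beta$, with explicit control of the exceptional set. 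This simultaneously (i) identifies your common value $L(\alpha)$ without any chaining through pairs, (ii) specializes to $L(\alpha)\approx 1/2$ when the apex is near $O$, and (iii) bypasses the need for a separate binomial tail estimate on $\Tr(\beta)$ and a separate computation of $\diam(\scrB(p,N))$ (the latter is Lemma~\ref{LemmaDiameter}). Your route via balancedness of $\pm N$-tuples would also work, but it rederives a special case of what the moment computation already delivers uniformly in $\alpha$.
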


\subsection{Pyramids with leveled heights}
Let us now level the pyramids by allowing the vertices of the bases to no longer be confined in the set $\cV(p,N)$.
Now, in the significantly larger set of all points within $\scrB(p,N)$,
a result, which can be proven by following the analogous steps in the proof of Corollary~\ref{CorollaryKeps}, 
takes place. 
The only difference in the result lies in the fact that the pyramids mentioned in Corollary~\ref{CorollaryPyramids} no longer have a distinguished vertex as the apex. 
Thus, all the lateral faces of the pyramids are nearly identical, being almost equilateral triangles, just like all triangles formed by the vertices of the bases. Additionally, 
and here lies the key distinction,
the normalized distance between almost any two points 
in~$\scrB(p,N)$
is about $1/\sqrt{6}\approx 0.408$ instead of $1/\sqrt{2}\approx 0.707$, as it was in the case of triangles with vertices in $\cV(p,N)$.

It is noteworthy that this $1/\sqrt{6}$ is a typical occurrence in the context of normalized distances in various multidimensional settings (see~\cite{ACZ2023, ACZ2024}).
A particular instance is related to the phenomenon of visibility.
Extensive research has been conducted on the various visibility problems
in sets of lattice points in different dimensions, when viewed from a fixed point 
or while walking along curves. For a limited selection of some of the relatively 
recent works that are closer to our subject see~Anderson et al.~\cite{ABCZ2024a, ABCZ2024b},
Boca et al.~\cite{BCZ2000,BS2022,BZ2005},
Athreya et al.~\cite{ACZ2023}, Lu and Meng~\cite{LM2023,Lu2024}.

Given two points in the cyclotomic hypercube $\scrB(p,N)$, we say that $\beta$ 
is \textit{visible} from $\alpha$ if their associated vectors in $\ZZ^{p-1}$ have no other
associated vector to some $\gamma\in\scrB(p,N)$
lying on the same straight line between them. 
Then, a $K$-polytope with vertices in $\scrB(p,N)$
is called \textit{self-visible} if all its vertices are visible to each other.

For any integer $K\ge 2$, let $\Omega_K(p,N)$ denote the set of all self-visible 
$K$-polytopes with vertices in $\scrB(p,N)$.
Then, knowing that, in the limit as $p$ tends to infinity, 
the set of self-visible $K$-polytopes 
is a large set, having a positive proportion in $\scrB(p,N)$  
(see~\mbox{\cite[Theorem 2]{ACZ2023}}), the next result
proves that almost all self-visible $K$-polytopes are super-regular.
\begin{theorem}\label{TheoremKVisibilityeps}
For any $\varepsilon>0$
and any integer $K\ge 2$,
there exists an effectively computable 
constant $C_{\varepsilon,K}$ such that for any prime
$p>C_{\varepsilon,K}$, and any positive integer
$N$ satisfying 
\mbox{$N/p\ge C_{\varepsilon,K}$}, we have
\begin{equation*}
	\frac{1}{\#\Omega_K(p,N)}\#\left\{(\alpha_1,\dots,\alpha_K)
 \in \Omega_K(p,N):\max_{1\le j<k\le K}\Big|\mathfrak{d}_{p,N}(\alpha_j,\alpha_k)
 -\sdfrac{1}{\sqrt{6}}\right|\le \varepsilon\Big\}\ge 1-\varepsilon.
\end{equation*}
\end{theorem}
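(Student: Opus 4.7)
The strategy mirrors the route from Theorem~\ref{TheoremMaineps} to Corollary~\ref{CorollaryKeps}, with $\cV(p,N)$ replaced by the full cyclotomic box $\scrB(p,N)$, and is then combined with the positive-density estimate for self-visible $K$-tuples from~\cite[Theorem 2]{ACZ2023}. The new target value $1/\sqrt{6}$ in place of $1/\sqrt{2}$ arises because, when the coefficients $a_j$ vary uniformly over $[-N,N]\cap\ZZ$ instead of over $\{-N,N\}$, their second moment shrinks from $N^2$ to $N(N+1)/3$, producing a factor of $1/\sqrt{3}$ in the typical distance.

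\textbf{Step 1: pairwise concentration in $\scrB(p,N)$.} First I would prove a $K=2$ analogue of Theorem~\ref{TheoremMaineps}, namely
\begin{equation*}
\sdfrac{1}{(\#\scrB(p,N))^2}\#\Big\{(\alpha_1,\alpha_2)\in\scrB(p,N)^2:\Big|\distance_{p,N}(\alpha_1,\alpha_2)-\sdfrac{1}{\sqrt{6}}\Big|\ge\varepsilon\Big\}\le\sdfrac{C}{\varepsilon^2 p},
\end{equation*}
for an absolute constant $C>0$. Writing $\alpha_1-\alpha_2=\sum_{j=1}^{p-1}c_j\omega^j$ with $c_j=a_j-a_j'$ independent symmetric random variables, Lemma~\ref{LemmaNormFormula} expresses $d(\alpha_1,\alpha_2)^2$ as a quadratic form in the $c_j$ whose coefficients are controlled by the pairings $\Tr(\omega^{j-k})$. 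A standard second-moment computation then yields both the mean and a Chebyshev-type variance bound; dividing by $\diam(\scrB(p,N))^2$ and taking a square root gives the displayed estimate.

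\textbf{Steps 2 and 3: union bound and self-visibility.} Applying Step~1 to each of the $\binom{K}{2}$ pairs of indices and taking a union bound produces
\begin{equation*}
\sdfrac{1}{(\#\scrB(p,N))^K}\#\Big\{(\alpha_1,\dots,\alpha_K)\in\scrB(p,N)^K:\max_{j<k}\Big|\distance_{p,N}(\alpha_j,\alpha_k)-\sdfrac{1}{\sqrt{6}}\Big|>\varepsilon\Big\}\le\sdfrac{CK^2}{\varepsilon^2 p}.
\end{equation*}
By~\cite[Theorem 2]{ACZ2023}, once $N/p$ exceeds a threshold $C^{(\mathrm{vis})}_K$, one has $\#\Omega_K(p,N)\ge c_K(\#\scrB(p,N))^K$ for an explicit constant $c_K>0$. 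Bounding the number of exceptional self-visible tuples by the total exceptional count and dividing by this lower bound gives a proportion at most $CK^2/(c_K\varepsilon^2 p)$, which is $\le\varepsilon$ provided $p>CK^2/(c_K\varepsilon^3)$; setting $C_{\varepsilon,K}:=\max\big(CK^2/(c_K\varepsilon^3),\,C^{(\mathrm{vis})}_K\big)$ completes the proof.

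\textbf{Main obstacle.} The computational crux is the variance estimate in Step~1, where the coefficient of each $c_j c_k$ must be extracted from $\Tr(\omega^{j-k})$ (equal to $p-1$ for $j\equiv k\pmod p$ and to $-1$ otherwise); it is precisely this cancellation, together with the $N(N+1)/3$ second moment of a uniform integer on $[-N,N]$, that produces both the constant $1/\sqrt{6}$ and the decay rate $1/p$ in the exceptional proportion. Once that estimate is proved uniformly in $N$, Steps 2 and 3 are formal consequences.
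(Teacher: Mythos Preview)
Your proposal is correct and follows essentially the same route the paper indicates: the paper does not spell out a proof of this theorem but states that the $\scrB(p,N)$ analogue ``can be proven by following the analogous steps in the proof of Corollary~\ref{CorollaryKeps}'' and then invokes \cite[Theorem~2]{ACZ2023} for the positive density of $\Omega_K(p,N)$, which is exactly your Steps~1--3.

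One small refinement: your Step~1 estimate cannot be fully uniform in $N$ with the target $1/\sqrt{6}$, because the mean of $\distance_{p,N}^2$ over $\scrB(p,N)^2$ works out to $\tfrac{N+1}{6N}+O(p^{-1})$ rather than exactly $1/6$; the discrepancy $1/(6N)$ must be absorbed into $\varepsilon$. This is harmless, since the hypotheses $p>C_{\varepsilon,K}$ and $N/p\ge C_{\varepsilon,K}$ force $N$ to be large in terms of $\varepsilon$, but you should track a separate $O(1/N)$ error in Step~1 rather than claim uniformity.
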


\medskip

The proofs of our results rely on calculating the distance moments, 
for which we obtain exact formulas in Sections~\ref{SectionAMalphaV} and ~\ref{SectionAMVV}. 
By the end, in Section~\ref{SectionProofsOfTheorems}, 
the conclusions are stated in an intermediate but more explicit form 
that allows for the tracing of the constants involved.

\section{The norm, the distance and the normalized distance}\label{SectionTheDistance}
Cyclotomic fields have interesting properties that make them 
useful in the study of algebraic integers, class field theory, the theory of modular forms, while also having ties to other areas of mathematics such as algebraic geometry and representation theory. 
In this section we will address only 
the essential topics needed to reach our objectives, while for other 
basic or advanced elements, we refer to the works 
by Marcus~\cite{Mar2018} and Washington~\cite{Was1997}.

Let $p$ be an odd prime and let $\omega=\exp(2\pi i/p)$ be a primitive $p$-th root of unity.
The cyclotomic field $\QQ(\omega)$ is a subfield of complex numbers obtained by adjoining $\omega$ to the field of rational numbers.
The permutations of any $\alpha\in\QQ(\omega)$
given by the automorphisms in the \mbox{Galois} group
$Gal(\QQ(\omega)/\QQ)$, which is isomorphic with the  
multiplicative group $(\ZZ/p\ZZ)^{\times}$, 
are weighed by the trace $\Tr_{\QQ(\omega)/\QQ}=\Tr$
defined as the additive map 
$\Tr_{}: \QQ(\omega) \rightarrow \QQ$ 
and  
\begin{equation}\label{eqDefTr}
   \Tr(\alpha) = \sum_{\sigma \in Gal(\QQ(\omega)/\QQ)} \sigma(\alpha).
\end{equation}
In particular, if $\alpha\in\ZZ[\omega]=\{a_1\omega+\cdots+a_{p-1}\omega^{p-1} : a_j\in\ZZ\}$, 
which is the ring of integers of~$\QQ(\omega)$, then $\Tr(\alpha)\in\ZZ$.
\subsection{The norm}
Let $\omega,\omega^2,\dots,\omega^{p-1}$
be a basis in $\QQ(\omega)$. 
Then any $\alpha\in\QQ(\omega)$ can 
be mirrored in the $(p-1)$-dimensional vector space
$\QQ^{p-1}$ by the map
$\psi: \QQ(\omega) \rightarrow\QQ^{p-1}$
that associates the trace 
coordinates 
\begin{equation*}
    \psi(\alpha) := \big(
    \Tr(\alpha\omega), \Tr(\alpha\omega^2),\dots,
    \Tr(\alpha\omega^{p-1})
    \big).
\end{equation*}
This linking induces naturally in $\QQ(\omega)$ a \textit{norm}
through the Euclidean norm of $\psi(\alpha)\in\QQ^{p-1}$.
Thus, we define
\begin{equation}\label{eqDefNorm}
    \nnorm{\alpha} :=
    \Big(
    \Tr(\alpha\omega)^2+\Tr(\alpha\omega^2)^2+\cdots+
    \Tr(\alpha\omega^{p-1})^2
    \Big)^{1/2}.
\end{equation}
To ascertain that $\nnorm{\cdot}$ defined by~\eqref{eqDefNorm} is a norm, 
we need to check that it is positive definite.
To start with, we observe that $\nnorm{\alpha}$ is always non-negative and $\nnorm{0}=0$.

Now suppose that $\nnorm{\alpha}=0$ and $\alpha\neq 0$.
Then, on one hand, $\psi(\alpha)=0$, which means that 
$\Tr(\alpha\omega) =\Tr(\alpha\omega^2)=\cdots=
    \Tr(\alpha\omega^{p-1})=0$.
On the other hand, writing $\alpha^{-1}$ in the 
$\omega$-basis as $\alpha^{-1}=b_1\omega+\cdots+b_{p-1}\omega^{p-1}$ 
with rational coefficients $b_1,\dots,b_{p-1}$, it follows that
$b_1\alpha\omega+\cdots+b_{p-1}\alpha\omega^{p-1}=1$.
Taking trace on both sides, this implies the contradiction
\begin{equation*}
  0 = b_1\Tr(\alpha\omega)+\cdots 
     + b_{p-1}\Tr(\alpha\omega^{p-1})
  =\Tr(1) = p-1.   
\end{equation*}
In conclusion, $\nnorm{\cdot}$ is positive definite.

Next, the homogeneity 
$\nnorm{b\alpha} = |b|\nnorm{\alpha}$
holds for any $b\in\QQ$ and $\alpha\in\QQ(\omega)$
because the trace is $\QQ$-linear.

Finally, the triangle inequality for $\nnorm{\cdot}$
is a consequence of the triangle inequality in 
$\QQ^{p-1}$.

Consequently, we have ascertained that $\nnorm{\cdot}$ 
defined by~\eqref{eqDefNorm} is indeed a norm.

\subsection{The distance}
The key outcome of the previous remarks is that $\QQ(\omega)$ becomes a metric space with the \textit{norm-induced distance} 
\begin{equation}
    d(\alpha,\beta):=\nnorm{\beta-\alpha},
\end{equation}
which satisfies the required axioms, as the norm provides the necessary properties.

It is worth noting that the distance defined in this manner 
closely resembles the Euclidean distance in vector spaces and also 
has properties that are well-suited to the characteristics of cyclotomic fields.
We mention here only two of them, which will be further explored in an upcoming paper.

Thus one can show that it is invariant under the action
of the Galois group 
$\Gal(\QQ(\omega)/\QQ)$, that is,
$d(\alpha,\beta)=d\big(\sigma(\alpha),\sigma(\beta)\big)$
for any automorphism $\sigma\in \Gal(\QQ(\omega)/\QQ)$.

Another noted property establishes a connection between 
the distance between two points and the layout of the entire fields they generate. 
More precisely, the property that is reminiscent of 
Krasner's fundamental Lemma~\mbox{\cite[{Lemma 8.1.6}]{NW2008})}
states that if $\alpha$ and $\beta$ are two elements in $\QQ(\omega)$ 
and if $\alpha$ is closer to $\beta$  
than half the distance to its closest away conjugate, then the entire 
field generated by $\alpha$ is included in the field generated by $\beta$.

\subsection{The central hypercube and the Euclidean norm}
For any odd prime $p$ and positive integer $N$, denote
$\omega=\exp(2\pi i/p)$ and let
$\scrB(p,N)$ be the symmetric hypercube defined by
\begin{equation*}
   \scrB(p,N):=\left\{a_1\omega+\cdots+a_{p-1}\omega^{p-1} \in\CC: 
    (a_1,\dots,a_{p-1})\in \big([-N,N]\cap  \ZZ\big)^{p-1}\right\}
    \subset\ZZ[\omega].
\end{equation*}
Because $\{\omega, \omega^2,\dots,\omega^{p-1}\}$ is a basis in $\ZZ[\omega]$, 
we know that ${\scrB}(p,N)$ contains 
\begin{equation}\label{eqCardinalB}
    \#{\scrB}(p,N) = (2N+1)^{p-1}
\end{equation}
distinct elements.

For any $\alpha = a_1\omega+\cdots+a_{p-1}\omega^{p-1} \in\QQ(\omega)$, 
we always write its associated vector of coordinates
$\balpha = (a_1,\dots,a_{p-1})\in\QQ^{p-1}$ 
in boldface without further  additional repetitions.
Then, for any $\alpha \in\QQ(\omega)$, we denote by $\nnorm{\balpha}_E$
the \textit{Euclidean norm} of $\alpha$, which is 
the Euclidean norm of $\balpha$ defined by 
\begin{equation}\label{eqEuclideanNorm}
   \nnorm{\balpha}_E:= \bigg(\sum_{i=1}^{p-1}a_i^2\bigg)^{1/2}.  
\end{equation}

\subsection{The key formula for the norm}
From the definition of the trace~\eqref{eqDefTr} it follows that
$\Tr(1)=p-1$,  and $\Tr(\omega)=\cdots=\Tr(\omega^{p-1})=-1$.
These imply two simple formulas for the trace. Thus, if
$\alpha=a_1\omega+\cdots+a_{p-1}\omega^{p-1}$, then
\begin{equation}\label{eqTrSum}
  \Tr(\alpha)=-(a_1+\cdots+a_{p-1}),
\end{equation}
and
\begin{equation}\label{eqTr2}
  \Tr(\alpha\omega^j)
  = \sum_{\substack{k=1\\k\ne p-j}}^{p-1}(-a_k)+(p-1)a_{p-j}
  =\Tr(\alpha)+pa_{p-j},
  \ \ \text{ for $j=1,\dots,p-1$.}
\end{equation}
Then, by combining definition~\eqref{eqDefNorm} and
relation~\eqref{eqTr2}, we see that
\begin{equation*}
    \nnorm{\alpha}^2 = \sum_{j=1}^{p-1}\Tr(\alpha\omega^j)^2
	=\sum_{j=1}^{p-1}\left(\Tr(\alpha)+pa_{p-j}\right)^2
	=\sum_{j=1}^{p-1}\left(\Tr(\alpha)+pa_{j}\right)^2.
\end{equation*}
Next, by expanding the square, employing~\eqref{eqTrSum} and 
simplifying the expression, it yields:
\begin{equation*}
  \begin{split}
      \nnorm{\alpha}^2 
      &=\sum_{j=1}^{p-1}\left(\Tr(\alpha)^2+2pa_j\Tr(\alpha)+p^2a_j^2\right)\\
	&=(p-1)\Tr(\alpha)^2+2p\Tr(\alpha)\sum_{j=1}^{p-1}a_j+p^2\sum_{j=1}^{p-1}a_j^2\\
	&=(p-1)\Tr(\alpha)^2-2p\Tr(\alpha)^2+p^2\sum_{j=1}^{p-1}a_j^2\\
	&=p^2\sum_{j=1}^{p-1}a_j^2-(p+1)\Tr(\alpha)^2.
  \end{split}
\end{equation*}
Thus, we have obtained a convenient formula that we state 
in the next lemma for later reference.
\begin{lemma}\label{LemmaNormFormula}
Let $\alpha=a_1\omega+\cdots+a_{p-1}\omega^{p-1}\in\QQ(\omega)$, 
and denote by $\balpha$ the associated 
vector $\balpha=(a_1,\dots,a_{p-1})\in\QQ^{p-1}$.
Then
\begin{equation}\label{eqNorm_Formula}
   \nnorm{\alpha}^2=p^2\nnorm{\balpha}_E^2-(p+1)\Tr(\alpha)^2.
\end{equation}
\end{lemma}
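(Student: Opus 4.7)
The plan is to reduce the norm squared $\nnorm{\alpha}^2$, defined as a sum of squares of trace coordinates $\Tr(\alpha\omega^j)$, to elementary quantities involving only the coordinate vector $\balpha$ and the scalar $\Tr(\alpha)$. Everything follows from two ingredients: the action of the trace on the power basis, and a linear identity that expresses each $\Tr(\alpha\omega^j)$ in terms of a single coefficient $a_{p-j}$ plus $\Tr(\alpha)$.

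First I would record the two basic trace evaluations. Since $\Gal(\QQ(\omega)/\QQ)$ has order $p-1$ and fixes $1$, one has $\Tr(1)=p-1$. For any $k\in\{1,\dots,p-1\}$, the element $\omega^k$ is a primitive $p$-th root of unity, whose Galois conjugates are exactly $\omega,\omega^2,\dots,\omega^{p-1}$; these sum to $-1$, so $\Tr(\omega^k)=-1$. By $\QQ$-linearity this immediately yields $\Tr(\alpha)=-(a_1+\cdots+a_{p-1})$, which is identity \eqref{eqTrSum}.

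Next I would prove the sharper formula $\Tr(\alpha\omega^j)=\Tr(\alpha)+pa_{p-j}$ for $j=1,\dots,p-1$. Writing $\alpha\omega^j=\sum_{k=1}^{p-1}a_k\omega^{k+j}$ and reducing exponents modulo $p$, exactly one term, namely $k=p-j$, produces $\omega^p=1$ with $\Tr=p-1$; all other terms give roots of unity with $\Tr=-1$. Collecting gives $\sum_{k\ne p-j}(-a_k)+(p-1)a_{p-j}=-\sum_k a_k+p\,a_{p-j}=\Tr(\alpha)+p\,a_{p-j}$.

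Finally, I would substitute this into $\nnorm{\alpha}^2=\sum_{j=1}^{p-1}\Tr(\alpha\omega^j)^2$, reindex by $k=p-j$ to obtain $\sum_{k=1}^{p-1}(\Tr(\alpha)+pa_k)^2$, expand the square, and use $\sum_k a_k=-\Tr(\alpha)$ to collapse the cross term. This yields $(p-1)\Tr(\alpha)^2-2p\Tr(\alpha)^2+p^2\nnorm{\balpha}_E^2=p^2\nnorm{\balpha}_E^2-(p+1)\Tr(\alpha)^2$, which is \eqref{eqNorm_Formula}. There is no genuine obstacle: the whole argument is a direct algebraic unfolding, and the only step that demands care is the index bookkeeping when computing $\Tr(\alpha\omega^j)$, since one must correctly identify the single exponent $k+j\equiv 0\pmod p$ that wraps around to contribute the dominant $(p-1)a_{p-j}$ term rather than $-a_{p-j}$.
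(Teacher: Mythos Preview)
Your proof is correct and follows essentially the same path as the paper: derive $\Tr(\alpha\omega^j)=\Tr(\alpha)+pa_{p-j}$ from the basic trace values $\Tr(1)=p-1$ and $\Tr(\omega^k)=-1$, substitute into $\nnorm{\alpha}^2=\sum_j\Tr(\alpha\omega^j)^2$, expand, and use $\sum_k a_k=-\Tr(\alpha)$ to simplify. The arguments match step for step.
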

As a point of reference, let us see the relation of size between the norms
$\nnorm{\cdot}_E$ and $\nnorm{\cdot}$.
\begin{remark}
For any $\alpha\in\QQ(\omega)$, 
with the associated vector $\balpha\in\QQ^{p-1}$,   
  we have
\begin{equation}\label{eqNormsInequality}
    \nnorm{\balpha}_E \le \nnorm{\alpha}.
\end{equation}
\end{remark}
To prove~\eqref{eqNormsInequality}, let
$\alpha=a_1\omega+\cdots+a_{p-1}\omega^{p-1}$ and appeal to the Cauchy-Schwartz inequality to get
\begin{equation*}
   \Tr(\alpha)^2=\bigg(\sum_{j=1}^{p-1}a_j\bigg)^2
     \le(p-1)\sum_{j=1}^{p-1}a_j^2
     = (p-1)\nnorm{\balpha}^2_E.
\end{equation*}
Then insert this inequality in~\eqref{eqNorm_Formula} to find that
\begin{equation*}
    \nnorm{\alpha}^2
    = p^2\nnorm{\balpha}_E^2-(p+1)\Tr(\alpha)^2
    \ge p^2\nnorm{\balpha}_E^2 - (p+1)(p-1)\nnorm{\balpha}_E^2
    =\nnorm{\balpha}_E^2,
\end{equation*}
from which~\eqref{eqNormsInequality} follows.

\subsection{The Euclidean diameter of the box}
Let $N\ge 0$ and $q\ge 3$ be integers. Let $\omega=\exp(2\pi i/q)$ be the fundamental $q$-th root of unity. 
(Our results are specifically for the case where $q$ is an odd
prime, and as the concepts self-support for a general $q\ge 3$, a more comprehensive discussion is needed elsewhere.)

Let $\scrB(q,N)\subset\mathbb{C}$ denote the set
of complex numbers represented as $f(\omega)$,
where $f\in\mathbb{Z}[X]$, $f(X)=a_1X+\cdots+a_{q-1}X^{q-1}$ and the coefficients of $f$ are in $[-N,N]$, that is,
\begin{equation*}
    \scrB(q,N) := \big\{
    z\in\CC : z = a_1\omega+\cdots+a_{q-1}\omega^{q-1},\
    a_1,\dots,a_{q-1}\in [-N,N]
    \big\}.
\end{equation*}
Then the set of vertices  $\mathcal{V}_{q,N}\subset \scrB(q,N)$ is obtained
when the coefficients take values only at the endpoints of the interval:
\begin{equation*}
    \cV(q,N) := \big\{
    z\in\mathbb{C} : z = a_1\omega+\cdots+a_{q-1}\omega^{q-1},\
    a_1,\dots,a_{q-1}\in \{-N,N\}
    \big\}.
\end{equation*}
Note that the geometrical shape of $\cV(q,N)$ is not changed when scaling by $N$, so that we may assume $N=1$.
\begin{figure}[htb]
 \centering
 \hfill
     \includegraphics[width=0.24\textwidth]{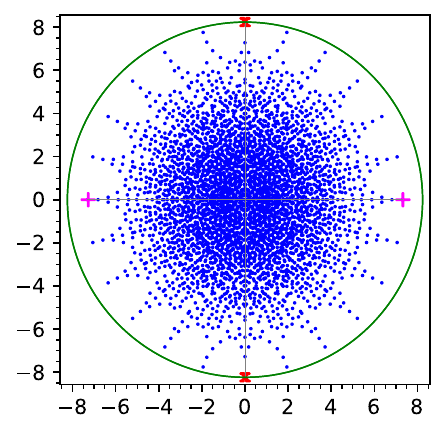}
    \includegraphics[width=0.24\textwidth]{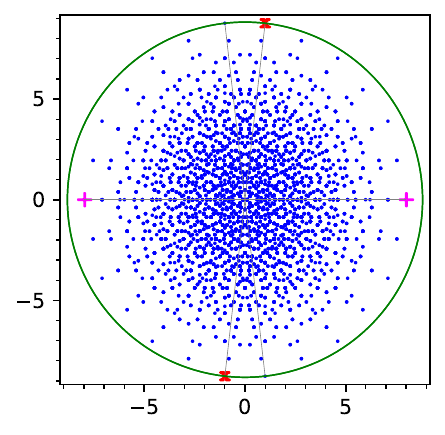}
     \includegraphics[width=0.24\textwidth]{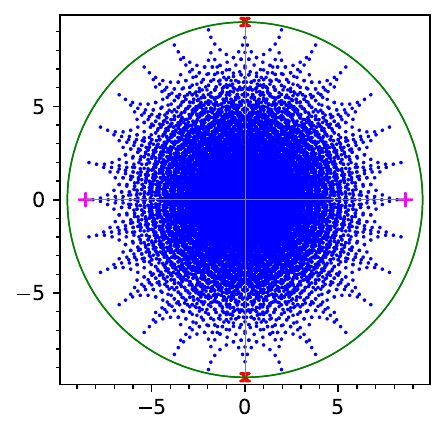}
    \includegraphics[width=0.246\textwidth]{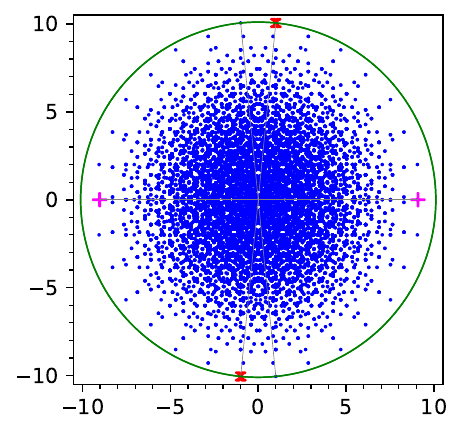}
\caption{
The set of vertices $\cV{(q,1)}$, 
their encircling circle and 
the poles highlighted for $q=13,14,15,16$.
} 
\label{FigurePoles13-16}
 \end{figure}
Two significant points in the box are the vertices North Pole 
and the East Pole, which we denote by $\NP(q)$ and $\EP(q)$.
While $\EP(q)$ is real for every $q$, $\NP(q)$ is purely imaginary only if $q$ is odd.
When $q$ is even, $\NP(q)$ is placed in the first quadrant closest to the imaginary axis among the vertices above the real axis farthest from the origin, and has a mirror double across the imaginary axis.
The set of vertices and the poles are shown in Figure~\ref{FigurePoles13-16} for $q=13,14,15,16$.
The southern and western poles $\SP(q)$ and $\WP(q)$ are the opposites of $\NP(q)$ and $\EP(q)$ and are their
mirror reflections with respect to the origin.

The North Pole is the purely imaginary complex number:
\begin{equation}\label{eqNorthPole}
 \NP(q)= \sum_{j=1}^{(q-1)/2}\omega^j
 -\sum_{j=(q+1)/2}^{q-1}\omega^j.
\end{equation}

The East Pole is the real number:
\begin{equation}\label{eqEastPole}
 \EP(q)= \sum_{j=1}^{\lfloor{q/4}\rfloor}\omega^j-\sum_{j=\lfloor{q/4}\rfloor+1}^{\lfloor{3q/4}\rfloor}\omega^j
 \sum_{j=\lfloor{3q/4}\rfloor+1}^{q-1}\omega^j.
\end{equation}

Then the Euclidean diameter of $\scrB(q,N)$ is
\begin{equation}\label{eqBEuclideanDiameter}
 \diam\nolimits_E(\scrB(q,N)) 
 = N\cdot \nnorm{ \NP(q)-\big(-\NP(q)\big)}_E = \sdfrac{2N}{i}\cdot \NP(q),\ \text{ if $q$ is odd}.
\end{equation} 

\subsection{The normalized distance}
To ensure a unified reference for comparison from hypercube to hypercube as $N$ or $p$
change, we will use in each hypercube  $\scrB(p,N)$
a normalized distance $\fd_{p,N}(\alpha,\beta)$, 
which depends on both $N$ and~$p$, but has the property that the diameter of 
$\scrB(p,N)$ measured by $\fd_{p,N}$ is equal to $1$.
In order to do this, we need to know the diameter of $\scrB(p,N)$, 
which we will calculate in the following lemma.

Let $ \diam\big(\scrB(p,N)\big)$ denote the \textit{diameter} of $\scrB(p,N)$,
which is defined as the maximum distance between any two points in the hypercube,
that is,
\begin{equation*}
    \diam\big(\scrB(p,N)\big) := \max_{\alpha,\beta\in\scrB(p,N)}
    \nnorm{\beta-\alpha}\,.
\end{equation*}

\begin{lemma}\label{LemmaDiameter}
Let $p$ be an odd prime and let $N$ be a positive integer. Then
\begin{equation}\label{eqLemmaDiameter}
    \diam\big(\scrB(p,N)\big) = 2Np(p-1)^{1/2}.
\end{equation}    
\end{lemma}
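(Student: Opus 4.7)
The plan is to use Lemma~\ref{LemmaNormFormula} to reduce the diameter to a finite-dimensional optimization problem, and then to exploit convexity together with the parity of $p-1$ to pin down the maximum.

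First I would observe that for $\alpha,\beta \in \scrB(p,N)$ with coordinate vectors $\balpha,\bbeta \in ([-N,N]\cap\ZZ)^{p-1}$, the difference $\gamma = \beta-\alpha$ has coordinate vector $\B{c} = \bbeta - \balpha \in ([-2N,2N]\cap\ZZ)^{p-1}$, and conversely every such $\B{c}$ is realized by some pair $\alpha,\beta$ (for example $\alpha = -\beta$ with $\bbeta = \B{c}/2$ when the $c_j$ are even, which is the only case we will need). Applying Lemma~\ref{LemmaNormFormula} and formula~\eqref{eqTrSum}, the squared diameter equals
\begin{equation*}
    \max_{\B{c}\in([-2N,2N]\cap\ZZ)^{p-1}}
    \Big( p^2 \sum_{j=1}^{p-1} c_j^2 - (p+1)\Big(\sum_{j=1}^{p-1} c_j\Big)^2 \Big).
\end{equation*}

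Next I would show that this maximum is attained at a vertex of the real box $[-2N,2N]^{p-1}$. Fixing all $c_k$ with $k\neq j$, the objective is a quadratic in $c_j$ whose leading coefficient is $p^2 - (p+1) = p^2 - p - 1 > 0$. So it is convex in $c_j$ and its maximum on $[-2N,2N]$ is attained at one of the endpoints $\pm 2N$. Iterating over $j$, the overall maximum is attained at some $\B{c}$ with $c_j \in \{-2N,+2N\}$ for all $j$.

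For such $\B{c}$, let $k$ denote the number of indices with $c_j = +2N$. Then $\sum c_j^2 = 4N^2(p-1)$ and $\sum c_j = 2N(2k-(p-1))$, so the objective becomes
\begin{equation*}
 4N^2\bigl[p^2(p-1) - (p+1)(2k-(p-1))^2\bigr].
\end{equation*}
Since $p$ is an odd prime, $p-1$ is even, hence the choice $k=(p-1)/2$ makes the second term vanish, achieving the value $4N^2 p^2(p-1)$. Any other integer $k$ gives a strictly smaller value. Taking the square root yields $\diam(\scrB(p,N)) = 2Np(p-1)^{1/2}$, and this is realized concretely by $\beta = N\sum_{j=1}^{(p-1)/2}\omega^j - N\sum_{j=(p+1)/2}^{p-1}\omega^j$ and $\alpha = -\beta$, which are both genuine elements of $\scrB(p,N)$.

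The proof is essentially a direct consequence of the norm formula, so there is no serious obstacle; the only point requiring care is the convexity-at-vertices argument and the use of the parity of $p-1$ to force the trace term of $\gamma$ to vanish simultaneously with the coordinates having maximal absolute value.
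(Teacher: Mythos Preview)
Your proof is correct and takes essentially the same route as the paper: both apply Lemma~\ref{LemmaNormFormula} to reduce the question to maximizing $p^2\nnorm{\B{c}}_E^2 - (p+1)\big(\sum_j c_j\big)^2$ over $\B{c}\in[-2N,2N]^{p-1}$, and then use the parity of $p-1$ to make the trace term vanish while the Euclidean term is maximal. The only cosmetic difference is that the paper simply cites the two obvious bounds $\nnorm{\bbeta-\balpha}_E^2\le(p-1)(2N)^2$ and $\Tr(\beta-\alpha)^2\ge0$ and exhibits a witness with alternating coefficients, whereas you reach the vertex set via a coordinate-wise convexity argument before exhibiting your (equally valid) block-sign witness.
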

\begin{proof}
From the expression~\eqref{eqNorm_Formula}  of the norm, we see that 
 $\alpha$ and $\beta$ are diametrically opposite points in $\scrB(p,N)$ 
 if they are in a certain position that makes 
 $p^2\nnorm{\bbeta-\balpha}_E^2-(p+1)\Tr(\alpha)^2$ as large as possible,
 where $\balpha$ and $\bbeta$ are the coordinate vectors associated 
 with  $\alpha$ and $\beta$.

 Next let us note that 
 \begin{equation}\label{eq2inequalities}
   \nnorm{\bbeta-\balpha}_E^2\le (p-1)(2N)^{2}\ \text{ and }\  
   \big(\Tr(\beta-\alpha)\big)^2\ge 0
 \end{equation}
for all $\alpha,\beta\in\scrB(p,N)$.

But, if we take $\alpha$ and $\beta$, for example, with alternating
coefficients, such as
\begin{equation*}
      \alpha_0 := \sum_{j=1}^{p-1} (-1)^jN\omega^j\ \text{ and }\  
      \beta_0 := \sum_{j=1}^{p-1} (-1)^{j+1}N\omega^j,
\end{equation*}
we see that $\beta_0 = -\alpha_0$ and $\Tr(\alpha_0)=\Tr(\beta_0)=0$,
because $p$ is odd.
Then, \mbox{$\Tr(\beta_0-\alpha_0) = 0$} and also
we have 
$\nnorm{\bbeta_0-\balpha_0}_E^2 = (p-1)(2N)^{2}$, 
that is, both inequalities in~\eqref{eq2inequalities} are satisfied 
with equalities.
As a consequence,
\begin{equation*}
    \diam\Big(\scrB(p,N)\Big) = \nnorm{\beta_0-\alpha_0} =
    \big(p^2\cdot(p-1) (2N)^2\big)^{1/2} = 2Np(p-1)^{1/2},
\end{equation*}
which concludes the proof of the lemma.
\end{proof}
Now, taking into account formula~\eqref{eqLemmaDiameter}, we can define 
the normalized measure that allows for a standardized comparison of 
the distances in different boxes $\scrB(p,N)$ when $N$ and $p$ vary.
 Thus, for any $\alpha,\beta\in\scrB(p,N)$, we define the \textit{normalized distance} 
 $\distance_{p,N}(\alpha,\beta)$ by
\begin{equation}\label{eqNormalizedDistance}
    \distance_{p,N}(\alpha,\beta)
    :=\frac{1}{2Np(p-1)^{1/2}} d(\alpha,\beta).
\end{equation}
Note that, while this definition guarantees that the normalized distance
between any two points $\alpha$ and~$\beta$ in $\scrB(p,N)$ is at most $1$, the formula can also be used for any pair of points  $\alpha,\beta\in\ZZ[\omega]$, even if their mutual normalized distance may be greater than $1$.
Also, from the proof of Lemma~\ref{LemmaDiameter} we may say that
the normalized diameter of both $\scrB(p,N)$
and $\cV(p,N)$ is equal to $1$.
\section{On the distances from an arbitrary point to the vertices}\label{SectionAMalphaV}
\subsection{The average distance from a point to the vertices}
Let $A_{p,N}(\alpha,\cV)$ denote the average normalized 
square distances from $\alpha\in\ZZ[\omega]$ to the vertices of $\scrB(p,N)$. 
The next lemma provides an exact formula for 
$A_{p,N}(\alpha,\cV)$.
\begin{lemma}\label{LemmaAverage}
  Let $p$ be an odd prime, let $N\ge 1$ be integer
  and let $\cV$ be the set of the vertices of the origin-centered hypercube $\scrB(p,N)$.
Then, for any $\alpha\in\ZZ[\omega]$, the average of the 
square normalized distance from $\alpha$ to $\cV$ is
\begin{equation}\label{eqAverageLA}
 \begin{split}
 A_{p,N}(\alpha,\cV)
   & := \frac{1}{\#\cV}
     \sum_{x \in\cV}\big(\distance_{p,N}(\alpha,x)\big)^2
     = \big(\distance_{p,N}(0,\alpha)\big)^2  
   + \sdfrac{1}{4}
   - \sdfrac{1}{4}p^{-1}
   - \sdfrac{1}{4}p^{-2}\,.  
 \end{split}
\end{equation}    
\end{lemma}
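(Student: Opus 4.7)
The plan is to unpack the definition of normalized distance using Lemma~\ref{LemmaNormFormula}, reduce the average to two independent coordinate-wise expectations over the random sign choices defining $\cV$, and assemble the pieces.

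First I would write, for any $\alpha\in\ZZ[\omega]$ and $x\in\cV$,
\begin{equation*}
  \bigl(\distance_{p,N}(\alpha,x)\bigr)^{2}
  = \frac{1}{4N^{2}p^{2}(p-1)}
    \Bigl(p^{2}\nnorm{\B{x}-\balpha}_{E}^{2}-(p+1)\Tr(x-\alpha)^{2}\Bigr),
\end{equation*}
using~\eqref{eqNorm_Formula} together with the $\QQ$-linearity of $\Tr$. A vertex of $\cV(p,N)$ is indexed by a sign pattern $(\varepsilon_{1},\dots,\varepsilon_{p-1})\in\{-1,+1\}^{p-1}$ with $c_{j}=\varepsilon_{j}N$. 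Thus averaging over $\cV$ is averaging over $2^{p-1}$ independent uniform $\pm N$ choices for each coordinate.

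Next I would compute the two averages separately. For the Euclidean piece,
\begin{equation*}
  \frac{1}{\#\cV}\sum_{x\in\cV}\nnorm{\B{x}-\balpha}_{E}^{2}
  = \sum_{j=1}^{p-1}\frac{(N-a_{j})^{2}+(-N-a_{j})^{2}}{2}
  = (p-1)N^{2}+\nnorm{\balpha}_{E}^{2}.
\end{equation*}
For the trace piece, I would use~\eqref{eqTrSum} to write $\Tr(x-\alpha)=-\sum_{j}(c_{j}-a_{j})$, expand the square, and exploit independence of the $c_{j}$ and that each $c_{j}$ has mean $0$ (so $\mathbb{E}[c_{j}-a_{j}]=-a_{j}$). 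The diagonal terms contribute $(p-1)N^{2}+\nnorm{\balpha}_{E}^{2}$ as above, while the off-diagonal terms contribute $2\sum_{j<k}a_{j}a_{k}$. Combining, and using $\nnorm{\balpha}_{E}^{2}+2\sum_{j<k}a_{j}a_{k}=\bigl(\sum_{j}a_{j}\bigr)^{2}=\Tr(\alpha)^{2}$, yields
\begin{equation*}
  \frac{1}{\#\cV}\sum_{x\in\cV}\Tr(x-\alpha)^{2}
  = (p-1)N^{2}+\Tr(\alpha)^{2}.
\end{equation*}

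Finally I would plug both averages back into the decomposition. The $\alpha$-dependent terms reassemble via Lemma~\ref{LemmaNormFormula} into $p^{2}\nnorm{\balpha}_{E}^{2}-(p+1)\Tr(\alpha)^{2}=\nnorm{\alpha}^{2}$, which divided by $4N^{2}p^{2}(p-1)$ is exactly $\bigl(\distance_{p,N}(0,\alpha)\bigr)^{2}$. The $N^{2}$ terms combine as
\begin{equation*}
  \frac{p^{2}(p-1)N^{2}-(p+1)(p-1)N^{2}}{4N^{2}p^{2}(p-1)}
  = \frac{p^{2}-p-1}{4p^{2}}
  = \tfrac{1}{4}-\tfrac{1}{4}p^{-1}-\tfrac{1}{4}p^{-2},
\end{equation*}
which is the constant in~\eqref{eqAverageLA}. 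There is no real obstacle here; the only point that requires a moment of care is the cross-term computation in the expansion of $\Tr(x-\alpha)^{2}$, where one must recognize that after using $\mathbb{E}[c_{j}]=0$ the off-diagonal contributions combine with the diagonal ones to rebuild precisely $\Tr(\alpha)^{2}$, producing the clean cancellation that makes the $\alpha$-dependent part collapse to $\bigl(\distance_{p,N}(0,\alpha)\bigr)^{2}$.
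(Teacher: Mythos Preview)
Your proof is correct and follows essentially the same approach as the paper: both use Lemma~\ref{LemmaNormFormula} to split $\distance_{p,N}(\alpha,x)^{2}$ into a Euclidean part and a trace part, compute each average by exploiting the symmetry $\mathbb{E}[c_j]=0$, $\mathbb{E}[c_j^{2}]=N^{2}$ of the vertex coordinates, and then reassemble the $\alpha$-dependent terms into $\nnorm{\alpha}^{2}$. The only difference is cosmetic---you phrase the cancellations in expectation/independence language, while the paper writes out the sums and invokes the symmetry of $\cV$ explicitly---but the computations are identical.
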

Note that the average $A_{p,N}(\alpha,\cV)$ depends on the position of
$\alpha$ in $\scrB(p,N)$ but not on the size $N$ of the box.

\begin{proof}
In the $\omega$-basis, 
let $\alpha=a_1\omega+\cdots+a_{p-1}\omega^{p-1}$
and denote a generic element of $\cV$ by
 $x=x_1\omega+\cdots+x_{p-1}\omega^{p-1}$.
Correspondingly, we denote by 
$\balpha=(a_1,\dots,a_{p-1})$ and 
$\bx=(x_1,\dots,x_{p-1})\in\{-N,N\}^{p-1}$ the associated vectors.
 Let $D = 2Np(p-1)^{1/2}$ denote the diameter of $\scrB(p,N)$.
 Then, employing formula ~\eqref{eqNorm_Formula},
 we have
 \begin{equation}\label{eqApN1}
  \begin{split}
 A_{p,N}(\alpha,\cV)
   & = \frac{p^2}{\#\cV D^2}
     \sum_{x \in\cV}\sum_{j=1}^{p-1}(a_j-x_j)^2
     -\frac{p+1}{\#\cV D^2}
\sum_{x \in\cV}\sum_{j=1}^{p-1}\sum_{k=1}^{p-1}(a_j-x_j)(a_k-x_k)
     \\
   &= \Sigma_A'-\Sigma_A'',      
  \end{split}
\end{equation} 
where $\Sigma_A'$ and  $\Sigma_A''$  
stand for the minuend and the subtrahend above.

The minuend is:
\begin{equation}\label{eqAS1}
  \begin{split}
 \Sigma_A'
   & = \frac{p^2}{\#\cV D^2}\sum_{x \in\cV}
   \sum_{j=1}^{p-1}  (a_j^2-2a_jx_j +x_j^2)
     \\    &
   = \frac{p^2}{D^2}\big(\nnorm{\balpha}_E^2+(p-1)N^2
   \big),
  \end{split}
\end{equation} 
because $x_j^2=N^2$ for any $x\in\cV$ and the number of
vertices with the $j$-th component equal to $N$ or $-N$ are the same,
so that the sum of the terms with $x_j$ at the first power vanishes.

Next to calculate $\Sigma_A''$, note that 
using the same fact on the cancellation in the sum 
of the terms with components at the first power, we have
\begin{equation}\label{eqAS21}
  \begin{split}
\sum_{x \in\cV}\sum_{j=1}^{p-1}\sum_{k=1}^{p-1}(a_j-x_j)(a_k-x_k) & = 
\sum_{x \in\cV}\bigg(\sum_{j=1}^{p-1}a_j\bigg)^2
-2 \sum_{j=1}^{p-1}a_j\sum_{x \in\cV}\sum_{k=1}^{p-1}x_k
+\sum_{x \in\cV}\sum_{j=1}^{p-1}\sum_{k=1}^{p-1}x_jx_k\\
& = \#\cV\Tr(\alpha)^2
   +\sum_{x \in\cV}\sum_{j=1}^{p-1}\sum_{k=1}^{p-1}x_jx_k.
  \end{split}   
\end{equation} 
In the last multiple sum, we separate the terms on the diagonal from the off-diagonal ones, to obtain:
\begin{equation}\label{eqAS22}
  \begin{split}
 \sum_{x \in\cV}\sum_{j=1}^{p-1}\sum_{k=1}^{p-1}x_jx_k
 =  \sum_{j=1}^{p-1}\sum_{x \in\cV}x_j^2
 + \sum_{j=1}^{p-1}
 \sum_{\substack{k=1\\ k\neq j}}^{p-1} \sum_{x \in\cV}x_jx_k
 = \#\cV\cdot (p-1)N^2. 
  \end{split}
\end{equation}
(Here, to see that the sum of the off-diagonal terms is zero, note on one hand that for each~$i,j$ fixed, there are four possible values for the pairs of components $x_j,x_k$, namely $-N,-N$; $-N,N$; $N,-N$; $N,N$,
and in two of these instances the product $x_jx_k$ is~$N^2$,
while in the other two the product $x_jx_k$ is~$-N^2$.  
On the other hand, the number of vertices $x\in\cV$ with the components $x_j,x_k$ being in any of these four instances is the same.)

Thus, using~\eqref{eqAS21} and~\eqref{eqAS22}, we obtain
 \begin{equation}\label{eqAS2}
  \begin{split}
 \Sigma_A''
   & = \frac{1}{D^2}(p+1)\Tr(\alpha)^2
   +\frac{1}{D^2}(p-1)(p+1)N^2.
  \end{split}
\end{equation} 
Inserting both~\eqref{eqAS1} and~\eqref{eqAS2} into~\eqref{eqApN1} yields
\begin{equation}\label{eqAverageLA2}
 \begin{split}
 A_{p,N}(\alpha,\cV)
   & = \frac{1}{\left(\diam\big(\scrB(p,N)\big)\right)^2}\big(
    p^2\nnorm{\balpha}_E^2 -(p+1)\Tr(\alpha)^2+N ^2(p^3-2p^2+1)
   \big).   
 \end{split}
\end{equation} 
On using~\eqref{eqNorm_Formula} and~\eqref{eqNormalizedDistance},
formula~\eqref{eqAverageLA2}
can be rewritten as~\eqref{eqAverageLA}, 
thus completing the proof of Lemma~\ref{LemmaAverage}.
\end{proof}

\subsection{A moment about the mean}
Here we calculate the second moment 
$M_{p,N}(\alpha,\cV)$
of the square normalized
distance from $\alpha\in\ZZ[\omega]$ to $\cV(p,N)$ about the average $A_{p,N}(\alpha,\cV)$.
\begin{lemma}\label{Lemma2Moment}
  Let $p$ be an odd prime, let $N\ge 1$ be integer,
  let $\cV$ be the set of the vertices of the 
 origin-centered hypercube $\scrB(p,N)$,
 and let $D=\diam\big(\scrB(p,N)\big)$.
Then, for all~$\alpha\in\ZZ[\omega]$, the second moment 
about the mean of the square normalized distance from $\alpha$ 
to~$\cV$ is
\begin{equation}\label{eqAverageL2M}
  \begin{split}
 M_{p,N}(\alpha,\cV)
   & := \frac{1}{\#\cV}
     \sum_{x \in\cV}
     \big(\distance_{p,N}^2(\alpha,x)-A_{p,N}(\alpha,\cV)\big)^2
  \\
   &\phantom{:}= \frac{ 2N^2}{D^4}
   \Big((N^2 + 2\nnorm{\balpha}_E^2)p^4
 - (N^2 + 2\Tr(\alpha)^2)p^3 
 - (3N^2 + 2\Tr(\alpha)^2)p^2 \\
  & \phantom{= \frac{ 2N^2}{D^4}
   \Big((N^2 + 2\nnorm{\balpha}_E^2)p^4
   - (N^2 .}
 + (N^2 - 2\Tr(\alpha)^2)p 
 + 2(N^2 - \Tr(\alpha)^2)\Big).
  \end{split}
\end{equation} 

\end{lemma}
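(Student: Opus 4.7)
The plan is to write the centered random variable $\distance_{p,N}^2(\alpha, x) - A_{p,N}(\alpha, \cV)$ as a linear combination of three simple $\cV$-mean-zero functions of $x$, and then to exploit the i.i.d.\ $\pm N$ structure of the coordinates $x_j$ to evaluate the resulting second moment with minimal bookkeeping. Throughout, write $\mathbb{E}[\cdot]$ for the average over $x \in \cV$ (equivalently, the coordinates $x_1,\dots,x_{p-1}$ are independent and uniform on $\{-N,N\}$), and set $D = \diam\bigl(\scrB(p,N)\bigr)$ and $S(x) := x_1+\cdots+x_{p-1}$, so that $\Tr(x) = -S(x)$ by~\eqref{eqTrSum}.

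First, applying Lemma~\ref{LemmaNormFormula} to $\alpha - x$ gives
\begin{equation*}
\distance_{p,N}^2(\alpha,x) = \frac{p^2}{D^2}\nnorm{\balpha - \bx}_E^2 - \frac{p+1}{D^2}\bigl(\Tr(\alpha) + S(x)\bigr)^2.
\end{equation*}
Since $x_j^2 = N^2$, this is affine in $x$ apart from a single quadratic term $S(x)^2$. Subtracting the mean $A_{p,N}(\alpha,\cV)$ supplied by Lemma~\ref{LemmaAverage}, the deterministic pieces cancel and one obtains
\begin{equation*}
\distance_{p,N}^2(\alpha,x) - A_{p,N}(\alpha,\cV) = c_1 U(x) + c_2 V(x) + c_3 W(x),
\end{equation*}
with $U(x) := \sum_j a_j x_j$, $V(x) := S(x)$, $W(x) := S(x)^2 - (p-1)N^2$, and
\begin{equation*}
c_1 = -\frac{2p^2}{D^2}, \qquad c_2 = -\frac{2(p+1)\Tr(\alpha)}{D^2}, \qquad c_3 = -\frac{p+1}{D^2}.
\end{equation*}
Each of $U$, $V$, $W$ has $\mathbb{E}[\cdot]=0$.

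Because all odd moments of $x_j$ vanish, $\mathbb{E}[x_{j_1}\cdots x_{j_k}]$ is nonzero only when every index appears an even number of times. This immediately forces $\mathbb{E}[UW] = \mathbb{E}[VW] = 0$, as both $UW$ and $VW$ are sums of cubic monomials in the $x_j$. Expanding the square and keeping only the surviving cross-term yields
\begin{equation*}
M_{p,N}(\alpha,\cV) = c_1^2 \mathbb{E}[U^2] + c_2^2 \mathbb{E}[V^2] + c_3^2 \mathbb{E}[W^2] + 2c_1c_2\,\mathbb{E}[UV].
\end{equation*}
Direct index pairing gives $\mathbb{E}[U^2] = N^2\nnorm{\balpha}_E^2$, $\mathbb{E}[V^2] = (p-1)N^2$, and $\mathbb{E}[UV] = -N^2\Tr(\alpha)$. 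For the remaining term I would compute $\mathbb{E}[S(x)^4]$ by splitting the monomials $x_{j_1}x_{j_2}x_{j_3}x_{j_4}$ into the all-equal pattern ($p-1$ monomials, each contributing $N^4$) and the two-pair pattern ($3(p-1)(p-2)$ monomials, each contributing $N^4$), which gives $\mathbb{E}[S(x)^4] = (p-1)(3p-5)N^4$ and hence $\mathbb{E}[W^2] = 2(p-1)(p-2)N^4$.

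Finally, I would substitute these values into the expression for $M_{p,N}$, use $D^2 = 4N^2p^2(p-1)$, factor out $2N^2/D^4$, and collect the coefficients of $\nnorm{\balpha}_E^2$, $\Tr(\alpha)^2$, and $N^2$ as polynomials in $p$. The only real work is this last algebraic bookkeeping: verifying that $(p+1)^2(p-1)(p-2) = p^4 - p^3 - 3p^2 + p + 2$ produces the $N^2$-polynomial, and that $2(p+1)^2(p-1) - 4p^2(p+1) = -2(p^3+p^2+p+1)$ produces the $\Tr(\alpha)^2$-polynomial, matching the right-hand side of~\eqref{eqAverageL2M}. No further ideas are needed beyond linearity of expectation and independence of coordinates.
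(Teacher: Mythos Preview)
Your proof is correct and takes a genuinely different route from the paper's. The paper computes the variance via $M=\mathbb{E}[\distance^4]-A^2$: it expands $\distance_{p,N}^4(\alpha,x)$ from Lemma~\ref{LemmaNormFormula} as a sum $\Sigma_\alpha'+\Sigma_\alpha''+\Sigma_\alpha'''$ (the quartic Euclidean piece, the cross piece, and the quartic trace piece), evaluates each by tracking many monomial types over all vertices---twelve in $\Sigma_\alpha''$, sixteen in $\Sigma_\alpha'''$---and subtracts $A_{p,N}(\alpha,\cV)^2$ at the end. You instead center first, writing $\distance_{p,N}^2(\alpha,x)-A_{p,N}(\alpha,\cV)=c_1U+c_2V+c_3W$, and then compute the variance from the $3\times 3$ covariance structure. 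This is strictly shorter: the vanishing of all odd moments kills $\mathbb{E}[UW]$ and $\mathbb{E}[VW]$ in one line, whereas the paper must watch the same cancellations emerge term by term. Both approaches ultimately invoke the same moment identities (e.g.\ your $\mathbb{E}[S(x)^4]=(p-1)(3p-5)N^4$ is exactly the paper's equation~\eqref{eqRemark4}), but your organization front-loads the cancellation and reduces the final algebra to the two polynomial checks you state, both of which are correct.
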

Note that since 
\begin{align*}
   \frac{ 2N^2}{D^4} = \sdfrac{1}{8N^2p^4(p-1)^2}
   = \sdfrac{1}{8N^2p^6} + O\big(N^{-2}p^{-7}\big),
\end{align*}
while $\nnorm{\balpha}_E^2\le N^2p$
and $\Tr(\alpha)^2\le N^2p^2$
it follows that $M_{p,N}(\alpha,\cV)$ is $O(p^{-1})$, 
more precisely
\begin{align}\label{eqMbound}
  0\le M_{p,N}(\alpha,\cV) \le \sdfrac{3}{p}
\end{align}
for all integer $N\ge 1$ and all odd primes $p$.
\begin{proof}
Let $\alpha=a_1\omega+\cdots+a_{p-1}\omega^{p-1}$ be fixed,
let $x=x_1\omega+\cdots+x_{p-1}\omega^{p-1}$ denote the elements of $\cV$,
and let $\balpha=(a_1,\dots,a_{p-1})\in\ZZ[\omega]$, and
$\bx=(x_1,\dots,x_{p-1})\in\{-N,N\}^{p-1}$ be the associated vectors.

Expanding the square, the second moment can be rewritten as
\begin{equation}\label{eq2M1}
  \begin{split}
 M_{p,N}(\alpha,\cV)
   & = \frac{1}{\#\cV}
     \sum_{x \in\cV} \distance_{p,N}^4(\alpha,x)
     -2 A_{p,N}(\alpha,\cV)\cdot \frac{1}{\#\cV}
     \sum_{x \in\cV} \distance_{p,N}^2(\alpha,x)
     + \big(A_{p,N}(\alpha,\cV)\big)^2
  \\
   &  = \frac{1}{\#\cV} \sum_{x \in\cV} \distance_{p,N}^4(\alpha,x)
   - \big(A_{p,N}(\alpha,\cV)\big)^2,
  \end{split}
\end{equation}
where $D = \diam\big(\scrB(p,N)\big)$.
We employ formula~\eqref{eqNorm_Formula} to express the sum above~as
\begin{equation}\label{eqMd4}
  \begin{split}
 \sum_{x \in\cV} \distance_{p,N}^4(\alpha,x) 
 & = \sdfrac{1}{D^4}\sum_{x \in\cV}\nnorm{\alpha-x}^4\\
 & = \sdfrac{1}{D^4}\sum_{x \in\cV}
 \Big(p^2\nnorm{\balpha-\bx}_E^2-(p+1)\Tr(\alpha-x)^2\Big)^2\\
 & = \Sigma_\alpha' + \Sigma_\alpha'' +\Sigma_\alpha''',
  \end{split}
\end{equation}
where
\begin{equation}\label{eqMS123}
  \begin{split}
 \Sigma_\alpha' & = \sdfrac{p^4}{D^4}\sum_{x \in\cV}\nnorm{\balpha-\bx}_E^4\\
 \Sigma_\alpha'' & = -\sdfrac{2p^2(p+1)}{D^4}\sum_{x \in\cV}\nnorm{\balpha-\bx}_E^2\Tr(\alpha-x)^2\\
  \Sigma_\alpha''' & = \sdfrac{(p+1)^2}{D^4}\sum_{x \in\cV}\Tr(\alpha-x)^4.
  \end{split}
\end{equation}

Next, in the following subsection, we bring out a few formulas that will be used several times afterwards.
The formulas are based on sums that cancel out due to the symmetry of the vertices and 
either the fact that the order of the added monomials is odd or that, in the case of an even order, the summation is only done over the `off-diagonal' terms.
 
 \subsubsection{Multinomial sums on vertices that cancel out}
For an arbitrary $\alpha\in\ZZ[\omega]$ we write
$\alpha=a_1\omega+\cdots+a_{p-1}\omega^{p-1}$
and let \mbox{$\balpha=(a_1,\dots,a_{p-1})$} be its associated vector.
We also let 
\mbox{$x=x_1\omega+\cdots+x_{p-1}\omega^{p-1}\in\cV$} denote a generic vertex of $\scrB(p,N)$. 

The following sum of linear terms cancels out as seen in the derivation of  relations~\eqref{eqAS1}:
\begin{align}\label{eqRemark1}
    \sum_{x\in\cV}\sum_{j=1}^{p-1}a_jx_j 
    = \sum_{j=1}^{p-1}a_j\sum_{x\in\cV}x_j =0. 
\end{align}

In the analogous quadratic form, we separate the diagonal and the off-diagonal terms. Then, since the sum of the latter cancels out, as noted while deriving formula~\eqref{eqAS22}, we obtain:
\begin{align}\label{eqRemark2}
  \sum_{x\in\cV}\sum_{j=1}^{p-1}\sum_{k=1}^{p-1}a_ja_kx_jx_k 
=  \sum_{j=1}^{p-1}a_j^2\sum_{x \in\cV}x_j^2
 + \sum_{j=1}^{p-1}a_j
 \sum_{\substack{k=1\\ k\neq j}}^{p-1}a_k 
 \sum_{x \in\cV}x_jx_k
 = \#\cV\cdot \nnorm{\balpha}_E^2 N^2,    
\end{align}    
and
\begin{equation}\label{eqRemark22}
  \begin{split}
  \sum_{x\in\cV}\sum_{j=1}^{p-1}\sum_{k=1}^{p-1}
  \sum_{m=1}^{p-1}\sum_{n=1}^{p-1}a_ja_kx_mx_n 
 &=  \sum_{j=1}^{p-1}a_j\sum_{k=1}^{p-1}a_k
   \sum_{x \in\cV}\bigg(\sum_{m=1}^{p-1}x_m^2
 + \sum_{m=1}^{p-1}
 \sum_{\substack{n=1\\ n\neq m}}^{p-1}x_mx_n\bigg)\\
 &= \#\cV\cdot \Tr(\alpha)^2 N^2(p-1). 
  \end{split}
\end{equation}

For a sum of cubic monomials in the components of $\bx$, we have:
\begin{align*}
  \sum_{x\in\cV}
  \sum_{j=1}^{p-1}\sum_{m=1}^{p-1}\sum_{n=1}^{p-1}
  x_jx_mx_n
  &=  \sum_{j=1}^{p-1}\sum_{x\in\cV} x_j^3
 +3 \sum_{j=1}^{p-1} \sum_{\substack{n=1\\ n\neq j}}^{p-1}
  \sum_{x\in\cV}x_j^2 x_n  
  + \sum_{j=1}^{p-1} \sum_{\substack{m=1\\ m\neq j}}^{p-1}
   \sum_{\substack{n=1\\ n\neq m\\ n\neq j}}^{p-1}
   \sum_{x\in\cV}x_jx_m x_n .  
\end{align*}
The first two terms on the right-side above are:
\begin{align*}
  \sum_{j=1}^{p-1}\sum_{x\in\cV} x_j^3
  &=  N^2 \sum_{j=1}^{p-1}\sum_{x\in\cV} x_j = 0\\[-4pt]
  \intertext{and}
 3 \sum_{j=1}^{p-1} \sum_{\substack{n=1\\ n\neq j}}^{p-1}
  \sum_{x\in\cV}x_j^2 x_n 
   &= 3 N^2 \sum_{j=1}^{p-1} \sum_{\substack{n=1\\ n\neq j}}^{p-1}
  \sum_{x\in\cV} x_n  =0,
\end{align*}
the equality with zero being a consequence of~\eqref{eqRemark1}.
Let us see that the third term also cancels out.
Indeed, the triple $(x_j,x_m,x_n)\in\{-N,N\}^3$ can take eight values
with two possible products $x_jx_mx_n=\pm N^3$.
Further, exactly four of these triples have the product of components equal to $-N^3$ and the other four have the product of components equal to~$N^3$. Finally note that the number of vertices $x\in\cV$
having as fixed components any of these triples is the same for each of them. Thus we have shown that
\begin{align}\label{eqRemark3}
  \sum_{x\in\cV}
  \sum_{j=1}^{p-1}\sum_{m=1}^{p-1}\sum_{n=1}^{p-1}
  x_jx_mx_n = 0. 
\end{align}

In the case of a sum involving a quartic expression, 
if we separate the terms with equal factors in monomials 
and note as above the cancellation of the partial sums 
of monomials with odd degrees, we obtain:
\begin{equation}\label{eqRemark4}
  \begin{split}
  \sum_{x\in\cV}
  \sum_{j=1}^{p-1}\sum_{k=1}^{p-1}\sum_{m=1}^{p-1}\sum_{n=1}^{p-1}
  x_jx_kx_mx_n 
  & =   \sum_{j=1}^{p-1}\sum_{x\in\cV}x_j^4 
     + 3 \sum_{j=1}^{p-1} \sum_{\substack{m=1\\ m\neq j}}^{p-1}\sum_{x\in\cV}x_j^2x_m^2\\
  & = \#\cV\cdot N^4(p-1) + 3\#\cV\cdot N^4(p-1)(p-2)\\
  & = \#\cV\cdot N^4(p-1)(3p-5). 
 \end{split}
\end{equation}
 \subsubsection{\texorpdfstring{The estimation of $\Sigma_\alpha'$}{The estimation of SigmaM'}}
The sum in $\Sigma_\alpha'$ is
\begin{align*}
  \sum_{x \in\cV}\nnorm{\balpha-\bx}_E^4
  &= \sum_{x \in\cV}\sum_{j=1}^{p-1}(a_j-x_j)^2
                   \sum_{k=1}^{p-1}(a_k-x_k)^2\\
  &= \sum_{x \in\cV}\sum_{j=1}^{p-1}(a_j^2-2a_jx_j+x_j^2)
                   \sum_{k=1}^{p-1}(a_k^2-2a_kx_k+x_k^2)\\
  &= \sum_{x \in\cV}
  \bigg(\nnorm{\balpha}_E^2+N^2(p-1)-2\sum_{j=1}^{p-1}a_jx_j\bigg)   \bigg(\nnorm{\balpha}_E^2+N^2(p-1)-2\sum_{k=1}^{p-1}a_kx_k\bigg).
\end{align*}
Changing the order of summation and using~\eqref{eqRemark1} yields:
\begin{align*}
  \sum_{x \in\cV}\nnorm{\balpha-\bx}_E^4
  &= \#\cV\big(\nnorm{\balpha}_E^4+2N^2(p-1)\nnorm{\balpha}_E^2
  + N^4 (p-1)^2\big)
  + 4 \sum_{x \in\cV}\sum_{j=1}^{p-1}\sum_{k=1}^{p-1}
  a_ja_kx_jx_k.
\end{align*}
On inserting here the value of the multiple sum from~\eqref{eqRemark2}, we deduce:
\begin{align}\label{eqSigmaM1}                     
    \Sigma_\alpha' = \sdfrac{\#\cV p^4}{D^4}
    \Big(\nnorm{\balpha}_E^4
    + 2N^2(p+1)\nnorm{\balpha}_E^2
  + N^4 (p-1)^2\Big).
\end{align}

 \subsubsection{\texorpdfstring{The estimation of $\Sigma_\alpha''$}{The estimation of SigmaM''}}
The sum in the definition of $\Sigma_\alpha''$ is:
\begin{align*}
    &\quad\sum_{x \in\cV}\nnorm{\balpha-\bx}_E^2
    \Tr(\alpha-x)^2
    =  \sum_{x \in\cV} \sum_{j=1}^{p-1}(a_j-x_j)^2
    \sum_{m=1}^{p-1}\sum_{n=1}^{p-1}
    (a_m-x_m)(a_n-x_n)\\
    &=  \sum_{x \in\cV} \sum_{j=1}^{p-1} 
    \sum_{m=1}^{p-1}\sum_{n=1}^{p-1}
    \big(a_j^2-2a_jx_j+x_j^2\big)
    \big(a_ma_n-a_mx_n-x_ma_n+x_mx_n\big).
\end{align*}

Expanding the product, we obtain the following twelve monomials:
\begin{align*}
  &a_j^2a_ma_n, && \uline{-a_j^2a_mx_n}, && \uline{-a_j^2x_ma_n}, && \uuline{a_j^2x_mx_n},
&& \uline{-2a_jx_ja_ma_n}, && \uuline{2a_jx_ja_mx_n},\\
  &x_j^2a_ma_n, && \dotuline{-x_j^2a_mx_n}, 
  && \dotuline{-x_j^2x_ma_n}, 
  && \uwave{x_j^2x_mx_n},
&& \uuline{2a_jx_jx_ma_n}, && \dashuline{-2a_jx_jx_mx_n}.
\end{align*}
Next, changing the order of summation, we have to sum each of these monomials over $x\in\cV$.

Note that the $1$st and the $7$th monomials are constant while $x$ runs over $\cV$.
Next, the sums of the $2$nd, the $3$rd and the $5$th monomials is zero, as seen in relation~\eqref{eqRemark1}.
For the $4$th, the $6$th and the $11$th, we separate the diagonal and the off-diagonal terms as seen in relation~\eqref{eqRemark2}.
We also use relation~\eqref{eqRemark2} to find the sum of the 
monomials of the $10$th type, noting that $x_j^2=N^2$.
The sums of the monomials of the $8$th, the $9$th cancel out
by~\eqref{eqRemark1} since $x_j^2=N^2$.
Also, the sum of the monomials of the $12$th type cancels out also, as seen in~\eqref{eqRemark3}.
These imply:
\begin{align*}
    \quad\sum_{x \in\cV}\nnorm{\balpha-\bx}_E^2\Tr(\alpha-x)^2
    & = \#\cV
       \Big(\nnorm{\balpha}_E^2 \Tr(\alpha)^2 + 0 + 0 \\
       & \phantom{ = \#\cV\big(} 
       + \nnorm{\balpha}_E^2 N^2 (p-1)  + 0 + 2 \Tr(\alpha)^2N^2 \\
        & \phantom{ = \#\cV\big(} 
        + N^2(p-1)\Tr(\alpha)^2 + 0  + 0 \\
        & \phantom{ = \#\cV\big(} 
        +\big(N^4(p-1) + N^4(p-1)(p-2)\big) + 2\Tr(\alpha)^2N^2 + 0
       \Big).
\end{align*}
Therefore:
\begin{equation}\label{eqSigmaM2} 
  \begin{split}
    \Sigma_\alpha'' = -\sdfrac{\#\cV\cdot 2p^2(p+1)}{D^4}
    \Big(\nnorm{\balpha}_E^2\Tr(\alpha)^2
    &+ N^2(p-1)\nnorm{\balpha}_E^2\\
    &+ N^2 (p+3)\Tr(\alpha)^2
    + N^4 (p-1)^2
    \Big).      
  \end{split}
\end{equation}

 \subsubsection{\texorpdfstring{The estimation of $\Sigma_\alpha'''$}{The estimation of Sigma_M'''}}
The sum in $\Sigma_\alpha'''$ is
\begin{align*}
  \sum_{x \in\cV}\Tr(\alpha-x)^4
  = \sum_{x \in\cV} \sum_{j=1}^{p-1} \sum_{k=1}^{p-1} 
    \sum_{m=1}^{p-1}\sum_{n=1}^{p-1}(a_j-x_j)(a_k-x_k)(a_m-x_m)(a_n-x_n).
\end{align*}
Expanding the product we find the following monomials:
\begin{equation}\label{eqSigma3MonA}
\begin{aligned}
  &-a_ja_ka_mx_n, &\quad& -a_ja_kx_ma_n, &\quad& -a_jx_ka_ma_n, &\quad& -x_ja_ka_ma_n,\\
  &-x_jx_kx_ma_n, &\quad& -x_jx_ka_mx_n, &\quad& -x_ja_kx_mx_n, &\quad& -a_jx_kx_mx_n,
\end{aligned}
\end{equation}
and
\begin{equation}\label{eqSigma3MonB}
\begin{aligned}
  &a_ja_kx_mx_n, &\qquad& a_jx_ka_mx_n, &\qquad& a_jx_kx_ma_n, &\qquad& a_ja_ka_ma_n,\\
  &x_ja_ka_mx_n, &\quad& x_ja_kx_ma_n, &\quad& x_jx_ka_ma_n, &\quad& x_jx_kx_mx_n.
\end{aligned}
\end{equation}
Because each monomial in~\eqref{eqSigma3MonA} has an odd number
of components of $\bx$, their total sum over $x,j,k,m,n$ cancels out
as seem in~\eqref{eqRemark1} and~\eqref{eqRemark3}.
Next, the sum  over $x,j,k,m,n$ of each of the first three monomials
on both rows of~\eqref{eqSigma3MonB} equals 
$\#\cV\cdot\Tr(\alpha)^2N^2(p-1)$, according 
to~\eqref{eqRemark22}.
Then, noting that $a_ja_ka_ma_n$ is constant and the sum of $x_jx_kx_mx_n$
is calculated in~\eqref{eqRemark4}, we have:
\begin{align*}
  \sum_{x \in\cV}\Tr(\alpha-x)^4
  = \#\cV\cdot\Tr(\alpha)^2N^2(p-1)\cdot 6
  + \#\cV\cdot\Tr(\alpha)^4 
   + \#\cV\cdot N^4(p-1)(3p-5).
\end{align*}
Therefore we obtain:
\begin{align}\label{eqSigmaM3}                     
    \Sigma_\alpha''' = \sdfrac{\#\cV (p+1)^2}{D^4}
    \Big(6\Tr(\alpha)^2N^2(p-1)
  + \Tr(\alpha)^4 
   + N^4(p-1)(3p-5)
    \Big).
\end{align}

\subsubsection{The conclusion of the proof of Lemma~\ref{Lemma2Moment}}

On inserting formulas~\eqref{eqSigmaM1}, \eqref{eqSigmaM2} and~\eqref{eqSigmaM3}
into~\eqref{eqMS123}, and then the results into~\eqref{eqMd4}, we have 
\begin{equation*}
  \begin{split}
 \sdfrac{1}{\#\cV}\sum_{x \in\cV} \distance_{p,N}^4(\alpha,x) 
 & =
\sdfrac{p^4}{D^4}
    \Big(\nnorm{\balpha}_E^4
    + 2N^2(p+1)\nnorm{\balpha}_E^2
  + N^4 (p-1)^2\Big)\\
&\quad
- \sdfrac{2p^2(p+1)}{D^4}
    \Big(\nnorm{\balpha}_E^2\Tr(\alpha)^2
    + N^2(p-1)\nnorm{\balpha}_E^2\\
&\phantom{\quad
- \sdfrac{2p^2(p+1)}{D^4}
    \Big(\nnorm{\balpha}_E^2\Tr(\alpha)^2+
}    
    + N^2 (p+3)\Tr(\alpha)^2
    + N^4 (p-1)^2
    \Big)\\
&\quad
+\sdfrac{(p+1)^2}{D^4}
    \Big(6\Tr(\alpha)^2N^2(p-1)
  + \Tr(\alpha)^4 
   + N^4(p-1)(3p-5)
    \Big).	
  \end{split}
\end{equation*}
On using this together with formula~\eqref{eqAverageLA2}
in~\eqref{eq2M1} we obtain~~\eqref{eqAverageL2M}, which concludes the proof of Lemma~\ref{Lemma2Moment}.
\end{proof}

\section{Distances from vertices to vertices}\label{SectionAMVV}
\subsection{The average distance between two vertices}
Let $A_{p,N}(\cV,\cV)$ denote the average normalized 
square distances between two vertices of $\scrB(p,N)$. 
The next lemma provides an exact formula for 
$A_{p,N}(\cV,\cV)$.
\begin{lemma}\label{LemmaAverageTwoVertices}
  Let $p$ be an odd prime, let $N\ge 1$ be integer
  and let $\cV$ be the set of the vertices of the origin-centered hypercube $\scrB(p,N)$.
Then
\begin{equation}\label{eqAverageVV}
 \begin{split}
 A_{p,N}(\cV,\cV)
   & := \frac{1}{\#\cV^2}
     \sum_{\alpha \in\cV}\sum_{\beta \in\cV}\distance_{p,N}^2 (\alpha,\beta)
     =  \sdfrac{1}{2}\big(1 - p^{-1} -p^{-2}\big)\,.   
 \end{split}
\end{equation}    
\end{lemma}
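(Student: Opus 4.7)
The plan is to reduce the double sum to the single-sum quantity already computed in Lemma~\ref{LemmaAverage}. Writing
\begin{equation*}
  A_{p,N}(\cV,\cV) = \frac{1}{\#\cV}\sum_{\alpha\in\cV} A_{p,N}(\alpha,\cV),
\end{equation*}
and invoking Lemma~\ref{LemmaAverage} inside the sum, the proof reduces to evaluating
\begin{equation*}
  S := \frac{1}{\#\cV}\sum_{\alpha\in\cV}\distance_{p,N}(0,\alpha)^2
      = \frac{1}{\#\cV\, D^2}\sum_{\alpha\in\cV}\nnorm{\alpha}^2,
\end{equation*}
where $D=\diam(\scrB(p,N))=2Np(p-1)^{1/2}$, after which the constants $\tfrac14(1-p^{-1}-p^{-2})$ from Lemma~\ref{LemmaAverage} simply double up.

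To compute $S$ I would apply the key norm identity~\eqref{eqNorm_Formula}, namely $\nnorm{\alpha}^2 = p^2\nnorm{\balpha}_E^2 - (p+1)\Tr(\alpha)^2$. For a vertex $\alpha\in\cV$ each coordinate satisfies $a_j^2=N^2$, hence $\nnorm{\balpha}_E^2=(p-1)N^2$ is constant over $\cV$. For the trace term, expanding $\Tr(\alpha)^2=(\sum_j a_j)^2=\sum_{j,k} a_j a_k$ and summing over $\alpha\in\cV$, the off-diagonal contributions vanish exactly as in~\eqref{eqRemark1} (each factor $a_j$ with $j\ne k$ takes values $\pm N$ equally often among the $2^{p-1}$ vertices), leaving only the diagonal:
\begin{equation*}
  \frac{1}{\#\cV}\sum_{\alpha\in\cV}\Tr(\alpha)^2
   = \frac{1}{\#\cV}\sum_{j=1}^{p-1}\sum_{\alpha\in\cV}a_j^2 = (p-1)N^2.
\end{equation*}

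Plugging in yields
\begin{equation*}
  \frac{1}{\#\cV}\sum_{\alpha\in\cV}\nnorm{\alpha}^2
    = p^2(p-1)N^2 - (p+1)(p-1)N^2 = (p-1)(p^2-p-1)N^2,
\end{equation*}
so after dividing by $D^2=4N^2p^2(p-1)$ one obtains $S=\tfrac{p^2-p-1}{4p^2}=\tfrac14(1-p^{-1}-p^{-2})$. Adding the constant from Lemma~\ref{LemmaAverage} produces $\tfrac12(1-p^{-1}-p^{-2})$, which is~\eqref{eqAverageVV}.

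I do not foresee a serious obstacle: the argument is a direct reuse of Lemma~\ref{LemmaAverage} together with the same parity-symmetry cancellation in~\eqref{eqRemark1} that was already carefully justified while computing $A_{p,N}(\alpha,\cV)$. The only mild bookkeeping point is confirming that the trace contribution averages to $(p-1)N^2$ rather than $0$, because here $j=k$ is allowed, but this is immediate from separating diagonal and off-diagonal terms as in~\eqref{eqRemark2}.
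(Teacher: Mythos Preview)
Your proposal is correct and follows essentially the same approach as the paper: reduce the double sum to $\frac{1}{\#\cV}\sum_{\alpha\in\cV}A_{p,N}(\alpha,\cV)$ via Lemma~\ref{LemmaAverage}, then evaluate $\frac{1}{\#\cV}\sum_{\alpha\in\cV}\distance_{p,N}(0,\alpha)^2$ using the norm identity~\eqref{eqNorm_Formula} together with the diagonal/off-diagonal cancellation of~\eqref{eqAS22}. The computations, including the value $(p-1)(p^2-p-1)N^2$ for the averaged squared norm and the final answer $\tfrac12(1-p^{-1}-p^{-2})$, match the paper's exactly.
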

\begin{proof}
Using formula~\eqref{eqAverageLA}, we have
 \begin{equation}\label{eqAVV1}
  \begin{split}
 A_{p,N}(\cV,\cV)
   & = \frac{1}{\#\cV }
     \sum_{\alpha \in\cV} \frac{1}{\#\cV }
     \sum_{\beta \in\cV}
     \distance_{p,N}^2 (\alpha,\beta)\\
   & = \frac{1}{\#\cV }  \sum_{\alpha \in\cV}
   \Big(\distance_{p,N}^2 (0,\alpha) 
   + \sdfrac{1}{4}
   - \sdfrac{1}{4}p^{-1}
   - \sdfrac{1}{4}p^{-2}\Big)\\
   & = \frac{1}{\#\cV }  \sum_{\alpha \in\cV}
   \distance_{p,N}^2 (0,\alpha) 
   +  \sdfrac{1}{4}\big(1 - p^{-1} -p^{-2}\big)\,.
  \end{split}
\end{equation} 
With the diameter $D$ given by $D^2 = 4N^2p^2(p-1)$
and formula for the norm~\eqref{eqNorm_Formula},
the first term above is
\begin{align*}
   \frac{1}{\#\cV }  \sum_{\alpha \in\cV}
   \distance_{p,N}(0,\alpha)^2  
   &=
  \frac{1}{\#\cV D^2}  \sum_{\alpha \in\cV}
   \nnorm{\alpha}^2  \\
    &=
   \frac{1}{\#\cV D^2}  \sum_{\alpha \in\cV} 
   \big(p^2\nnorm{\balpha}_E^2-(p+1)\Tr(\alpha)^2\big)\\
   &=
   \frac{1}{\#\cV D^2}  \sum_{\alpha \in\cV} 
   p^2\sum_{j=1}^{p-1}\alpha_j^2
   - \frac{1}{\#\cV D^2}  \sum_{\alpha \in\cV} 
   (p+1)\sum_{j=1}^{p-1}\sum_{k=1}^{p-1}\alpha_j\alpha_k.
\end{align*}
Changing the order of summation and observing as in~\eqref{eqAS22}
and in the remark that followed that the sum of the off-diagonal
in the second term cancels out, we further obtain
\begin{align*}
   \frac{1}{\#\cV }  \sum_{\alpha \in\cV}
   \distance_{p,N}(0,\alpha)^2  
   &=
   \frac{1}{\#\cV D^2}
   p^2(p-1) \Big(2^{p-2}N^2+2^{p-2}(-N)^2\Big)\\
   &\quad- \frac{1}{\#\cV D^2}  
   (p+1)(p-1) \Big(2^{p-2}N^2+2^{p-2}(-N)^2\Big)\\
   &=\sdfrac{1}{4}\big(1 - p^{-1} -p^{-2}\big)\,.
\end{align*}
The lemma then follows on inserting this result into 
the right hand-side of~\eqref{eqAVV1}.
\end{proof}

\begin{lemma}\label{LemmaMomentLVV}
  Let $p$ be an odd prime, let $N\ge 1$ be integer
  and let $\cV$ be the set of the vertices of the origin-centered hypercube $\scrB(p,N)$.
Then
\begin{equation}\label{eqMomentLVV}
 \begin{split}
 L_{p,N}(\cV,\cV)
   & := \frac{1}{\#\cV^2}
     \sum_{\alpha \in\cV}\sum_{\beta \in\cV}\distance_{p,N}^4(\alpha,\beta)\\
    & =  \sdfrac{1}{4(p-1)}
     \big(p-2+ p^{-1} +2p^{-2}-5p^{-3}-4p^{-4}\big)\,.
 \end{split}
\end{equation}    
\end{lemma}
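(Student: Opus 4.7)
The strategy is to build $L_{p,N}(\cV,\cV)$ out of the work already done for Lemma~\ref{Lemma2Moment}. Specifically, inside that proof the quantity
\begin{equation*}
  S(\alpha):=\sdfrac{1}{\#\cV}\sum_{x\in\cV}\distance_{p,N}^4(\alpha,x)
\end{equation*}
is written explicitly as a polynomial in $\nnorm{\balpha}_E^2$, $\nnorm{\balpha}_E^4$, $\Tr(\alpha)^2$, $\Tr(\alpha)^4$ and the mixed term $\nnorm{\balpha}_E^2\Tr(\alpha)^2$, via the combination of~\eqref{eqSigmaM1}, \eqref{eqSigmaM2} and~\eqref{eqSigmaM3}. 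Since
\begin{equation*}
  L_{p,N}(\cV,\cV)=\sdfrac{1}{\#\cV}\sum_{\alpha\in\cV}S(\alpha),
\end{equation*}
all that remains is to average the five quantities above when $\alpha$ runs over $\cV$.

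For $\alpha\in\cV$ one has $a_j=\pm N$ for every $j$, so the ``Euclidean'' quantities are actually constant along $\cV$:
\begin{equation*}
  \nnorm{\balpha}_E^2=(p-1)N^2,\qquad \nnorm{\balpha}_E^4=(p-1)^2 N^4.
\end{equation*}
The trace quantities I would handle by writing $\Tr(\alpha)=-N\sum_{j=1}^{p-1}\varepsilon_j$ with $\varepsilon_j\in\{-1,1\}$ uniform, which is exactly the setup used already in the formulas~\eqref{eqRemark1}--\eqref{eqRemark4}. Expanding $\bigl(\sum_j\varepsilon_j\bigr)^{2}$ and $\bigl(\sum_j\varepsilon_j\bigr)^{4}$ and keeping only those monomials in which every index appears an even number of times (all others average to zero over $\cV$), the diagonal counting yields
\begin{equation*}
  \sdfrac{1}{\#\cV}\sum_{\alpha\in\cV}\Tr(\alpha)^2=(p-1)N^2,
\qquad
  \sdfrac{1}{\#\cV}\sum_{\alpha\in\cV}\Tr(\alpha)^4=(p-1)(3p-5)N^4,
\end{equation*}
exactly mirroring the bookkeeping already used to derive~\eqref{eqRemark2} and~\eqref{eqRemark4}. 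Because $\nnorm{\balpha}_E^2$ is constant, the mixed average is also immediate:
\begin{equation*}
  \sdfrac{1}{\#\cV}\sum_{\alpha\in\cV}\nnorm{\balpha}_E^2\Tr(\alpha)^2=(p-1)^2 N^4.
\end{equation*}

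Substituting these five averages into the formula for $S(\alpha)$ and factoring the common $N^4(p-1)/D^4$ with $D^2=4N^2p^2(p-1)$, the three blocks coming from $\Sigma_\alpha',\Sigma_\alpha'',\Sigma_\alpha'''$ collapse to
\begin{equation*}
  L_{p,N}(\cV,\cV)=\sdfrac{1}{4p^4(p-1)}\Big[p^5-2p^3(p+1)+(p+1)^2(3p-4)\Big].
\end{equation*}
Expanding the bracket gives $p^5-2p^4+p^3+2p^2-5p-4$, and dividing numerator and denominator by $p^4$ puts this in the form asserted in~\eqref{eqMomentLVV}. The whole argument is routine linear algebra once the Euclidean quantities are recognized as constants on $\cV$; the only place requiring care is the combinatorial pairing count for $\Tr(\alpha)^4$, so this fourth moment computation is the main (and only modest) obstacle.
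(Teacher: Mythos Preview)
Your proof is correct and genuinely more economical than the paper's. The paper redoes the double sum $\sum_{\alpha\in\cV}\sum_{\beta\in\cV}d(\alpha,\beta)^4$ essentially from scratch: it splits into three pieces $\Sigma_\cV',\Sigma_\cV'',\Sigma_\cV'''$ and re-expands all the multinomial products over pairs $(\alpha,\beta)$ of vertices, then re-invokes the parity/diagonal cancellation arguments. You instead recycle the full output of Lemma~\ref{Lemma2Moment}'s proof, noting that the inner sum $S(\alpha)$ is already written as an explicit polynomial in $\nnorm{\balpha}_E^2$ and $\Tr(\alpha)^2$; since $\nnorm{\balpha}_E^2$ is constant on $\cV$, the remaining averaging over $\alpha\in\cV$ only requires the two trace moments, which are short combinatorial counts. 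The paper's direct approach is self-contained and does not refer back to intermediate formulas, but your route avoids repeating the long monomial bookkeeping and reaches the same polynomial $p^5-2p^4+p^3+2p^2-5p-4$ in a few lines.
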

\begin{proof}
With the diameter $D$ satisfying $D^2 = 4N^2p^2(p-1)$
and using formula~\eqref{eqNorm_Formula} for the square of the norm,
the moment $L_{p,N}(\cV,\cV)$ can be rewritten as
\begin{equation}\label{eqM4A}
    \begin{split}
        L_{p,N}(\cV,\cV) &= \frac{1}{\#\cV^2D^4}
        \sum_{\alpha\in \cV}\sum_{\beta\in \cV}d(\alpha,\beta)^4\\
        &:= \frac{1}{\#\cV^2D^4}\, \big(\Sigma_\cV'+\Sigma_\cV''+\Sigma_\cV'''\big),
    \end{split}
\end{equation}
where
\begin{equation*}
  \begin{split}
    \Sigma_\cV' 
    & =  p^4\sum_{\alpha\in \cV}\sum_{\beta\in \cV}
        \nnorm{\bbeta-\balpha}_E^4,\\
        \Sigma_\cV'' 
    & = -2p^2(p+1)\sum_{\alpha\in \cV}\sum_{\beta\in \cV}
        \nnorm{\bbeta-\balpha}_E^2\Tr(\beta-\alpha)^2,\\
        \Sigma_\cV''' 
    & =  (p+1)^2\sum_{\alpha\in \cV}\sum_{\beta\in \cV}
        \Tr(\beta-\alpha)^4.
  \end{split}
\end{equation*}

In the following, let $\alpha=a_1\omega+\cdots+a_{p-1}\omega^{p-1}$
and 
$\beta = b_1\omega+\cdots+b_{p-1}\omega^{p-1}$ be the generic expression of
vertices written in the $\omega$-basis $\{\omega,\dots,\omega^{p-1}\}$
with coefficients $a_j,b_j\in\{-N,N\}$ for $1\le j\le p-1$.

The proof follows similar steps to those in the proof of Lemma~\ref{Lemma2Moment},
so we will highlight only where modifications arise because both parameters $\alpha$ and $\beta$ now run over $\cV$.
Here, also, the summations of different monomials are independent of each other and cancel out if any of their factors are raised to an odd power.
Also, as needed, we will distinguish terms that are on the diagonal from those that are not and sum them separately.
\subsubsection{\texorpdfstring{The estimation of $\Sigma_\cV'$}{The estimation of Sigma4'}}
The inner terms that are added in~$\Sigma_{\cV}'$~are:
\begin{equation*}
  \begin{split}
    \nnorm{\bbeta-\balpha}_E^4 
    & = 
    \big((b_1-a_1)^2+\cdots+(b_{p-1}-a_{p-1})^2\big)^2\\
    &= \sum_{j=1}^{p-1}\sum_{k=1}^{p-1}
      \big(a_j^2a_k^2+ a_j^2b_k^2 + a_k^2b_j^2 + b_j^2b_k^2\big)\\
      &\quad -2\sum_{j=1}^{p-1}\sum_{k=1}^{p-1}
      \big(a_ja_k^2b_j+ a_jb_jb_k^2 + a_j^2a_kb_k + a_kb_j^2b_k
      -2a_ja_kb_jb_k\big)\,.
  \end{split}
\end{equation*}
Expanding the polynomial and keeping only the terms that have a non-zero contribution, we see that $\Sigma_\cV'$ is
\begin{equation*}
  \begin{split}
   \Sigma_\cV'&= p^4 \sum_{j=1}^{p-1}\sum_{k=1}^{p-1}
       2^{2p-4}\bigg(
       \sum_{a_j\in \{-N, N\}} \sum_{a_k\in \{-N, N\}} a_j^2a_k^2
       + \sum_{a_j\in \{-N, N\}} \sum_{b_k\in \{-N, N\}} a_j^2b_k^2\\ 
       &\qquad\qquad\qquad\qquad\qquad\qquad
       + \sum_{a_k\in \{-N, N\}} \sum_{b_j\in \{-N, N\}} a_k^2b_j^2
       + \sum_{b_j\in \{-N, N\}} \sum_{b_k\in \{-N, N\}}b_j^2b_k^2\bigg)\\
       &\quad
       + 4p^4 \sum_{j=1}^{p-1}\sum_{\substack{k=1\\k=j}}^{p-1}
       2^{2p-4}
       \sum_{a_j\in \{-N, N\}} \sum_{a_k\in \{-N, N\}}
       \sum_{b_j\in \{-N, N\}} \sum_{b_k\in \{-N, N\}}
       a_ja_kb_jb_k\,.      
  \end{split}
\end{equation*}
Adding the sums together, we arrive at:
\begin{equation}\label{eqS4prim}
  \begin{split}
   \Sigma_\cV'& =  p^4 2^{2p-4} (p-1)^2\cdot (2N^2)^2 \cdot 4
                + 4p^4 2^{2p-4} (p-1) \cdot (2N^2)^2\\
         &=  p^5  (p-1)2^{2p}N^4.
  \end{split}
\end{equation}
 \subsubsection{\texorpdfstring{The estimation of $\Sigma_\cV''$}{The estimation of Sigma4''}}

The terms added in $\Sigma_\cV''$ are
\begin{equation*}
   \begin{split}
    \nnorm{\bbeta-\balpha}_E^2\Tr(\beta-\alpha)^2 = 
    \sum_{j=1}^{p-1}\sum_{m=1}^{p-1}\sum_{n=1}^{p-1}
    (a_j-b_j)^2(a_m-b_m)(a_n-b_n).
   \end{split}   
\end{equation*}
Expanding the product and keeping only the terms with even power factors that have a non-zero contribution, it follows:
\begin{equation*}
  \begin{split}
  \Sigma_\cV''
  & = -2p^2(p+1)2^{2p-3}
  \sum_{j=1}^{p-1}\sum_{a_j\in \{-N, N\}} a_j^4\cdot 2\\
  &\quad -2p^2(p+1)2^{2p-4} 
  \sum_{j=1}^{p-1}\sum_{\substack{m=1\\ m\neq j}}^{p-1} 
  \sum_{a_j\in \{-N, N\}}\sum_{a_m\in \{-N, N\}}a_j^2a_m^2\cdot 2\\
  &\quad -2p^2(p+1)2^{2p-4} 
    \sum_{j=1}^{p-1}\sum_{m=1}^{p-1} 
 \sum_{a_j\in \{-N, N\}}
 \sum_{b_m\in \{-N, N\}}a_j^2b_m^2\cdot 2\\
  &\quad -2p^2(p+1)2^{2p-4}   
  \sum_{j=1}^{p-1}   
  \sum_{a_j\in \{-N, N\}}\sum_{b_m\in \{-N, N\}}
  2   a_j^2b_m^2\cdot 2\,,
  \end{split}
\end{equation*}
where the multiplicative factors at the end of the lines
take into account the equal sums corresponding to 
the remaining symmetric terms.
This implies
\begin{equation}\label{eqSigma4secund}
  \begin{split}
  \Sigma_\cV''
  & = -4p^2(p+1)2^{2p-3}(p-1) \cdot 2N^4
   -4p^2(p+1)2^{2p-4}(p-1)(p-2)\cdot 4N^4\\    
  &\quad
  -4p^2(p+1)2^{2p-4}(p-1)^2\cdot 4N^4
 -8p^2(p+1)2^{2p-4}(p-1)\cdot 4N^4\\
 &= -p^3 (p-1)(p+1)2^{2p+1}N^4.     
  \end{split}
\end{equation}
 \subsubsection{\texorpdfstring{The estimation of $\Sigma_\cV'''$}{The estimation of Sigma4'''}}
The terms added in $\Sigma_\cV'''$ are:
\begin{equation*}
   \begin{split}
    \Tr(\beta-\alpha)^4 = 
    \sum_{j=1}^{p-1}\sum_{k=1}^{p-1}\sum_{m=1}^{p-1}\sum_{n=1}^{p-1}
    (a_j-b_j)(a_k-b_k)(a_m-b_m)(a_n-b_n).
   \end{split}   
\end{equation*}
Expanding the product and keeping in the sums only the term with even power factors, 
whose totals do not completely reduce, we obtain
\begin{equation*}
  \begin{split}
        \Sigma_\cV''' 
    & =  (p+1)^22^{2p-3}\sum_{j=1}^{p-1}
     \sum_{a_j\in \{-N, N\}} 
       a_j^4 \cdot 2 \\
    &\quad +  (p+1)^22^{2p-4}
    \sum_{j=1}^{p-1}\sum_{a_j\in \{-N, N\}}a_j^2 
     \sum_{\substack{m=1\\m\neq j}}^{p-1}\sum_{a_m\in \{-N, N\}}a_m^2 \cdot 3 \cdot 2 \\
    &\quad +
    (p+1)^22^{2p-4}\sum_{j=1}^{p-1} \sum_{m=1}^{p-1} 
    \sum_{a_j\in \{-N, N\}}a_j^2 \sum_{b_m\in \{-N, N\}}  b_m^2\cdot 6.
  \end{split}
\end{equation*}
Next this reduces to
\begin{equation}\label{eqSigma4tert}
  \begin{split}
        \Sigma_\cV''' 
    & =  2(p+1)^22^{2p-3}(p-1)\cdot 2N^4\\
    &\quad +  6(p+1)^22^{2p-4}(p-1)(p-2)\cdot 4N^4\\
    &\quad +
    6(p+1)^22^{2p-4}(p-1)^2 \cdot 4N^4\\
    &= (p-1)(p+1)^2(3p-4)2^{2p}N^4. 
  \end{split}
\end{equation}

 \subsubsection{\texorpdfstring{Completion of the proof}
 {Completion of the proof}}

Bringing together the conclusions from relations~\eqref{eqS4prim},
\eqref{eqSigma4secund}, and~\eqref{eqSigma4tert}
yields
\begin{equation*}
  \begin{split}
    \Sigma_\cV'+\Sigma_\cV''+ \Sigma_\cV''' 
    &= p^5  (p-1)2^{2p}N^4\\
    &\quad-p^3 (p-1)(p+1)2^{2p+1}N^4\\
    &\quad +(p-1)(p+1)^2(3p-4)2^{2p}N^4\\
    &= (p-1)\big(p^5-2p^4+p^3+2p^2-5p-4\big) 2^{2p}N^4.
  \end{split}
\end{equation*}
On inserting this result in~\eqref{eqM4A}, leads to~\eqref{eqMomentLVV}, thereby completing the proof of Lemma~\ref{LemmaMomentLVV}.
\end{proof}

\begin{lemma}\label{LemmaMomentMVV}
  Let $p$ be an odd prime, let $N\ge 1$ be integer
  and let $\cV$ be the set of the vertices of the origin-centered hypercube $\scrB(p,N)$.
Then
\begin{equation}\label{eqMomentMVV}
 \begin{split}
 M_{p,N}(\cV,\cV)
   & := \frac{1}{\#\cV^2}
     \sum_{\alpha \in\cV}\sum_{\beta \in\cV}
     \big(\distance_{p,N}^2(\alpha,\beta)-A_{p,N}(\cV,\cV)\big)^2\\
    & =  \sdfrac{1}{4(p-1)}
     \big(1 - p^{-2} -4p^{-3}-3p^{-4}\big)\,.
 \end{split}
\end{equation}    
\end{lemma}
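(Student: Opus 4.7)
The plan is to reduce the proof to a purely algebraic manipulation of the two quantities that have already been computed: the mean $A_{p,N}(\cV,\cV)$ from Lemma~\ref{LemmaAverageTwoVertices} and the fourth-power moment $L_{p,N}(\cV,\cV)$ from Lemma~\ref{LemmaMomentLVV}. Specifically, the variance-type identity
\begin{equation*}
    M_{p,N}(\cV,\cV) = L_{p,N}(\cV,\cV) - A_{p,N}(\cV,\cV)^2
\end{equation*}
follows by expanding the square inside the defining double sum:
\begin{equation*}
    \big(\distance_{p,N}^2(\alpha,\beta) - A_{p,N}(\cV,\cV)\big)^2 = \distance_{p,N}^4(\alpha,\beta) - 2 A_{p,N}(\cV,\cV)\distance_{p,N}^2(\alpha,\beta) + A_{p,N}(\cV,\cV)^2,
\end{equation*}
and then averaging over $(\alpha,\beta)\in\cV\times\cV$; the middle term contributes $-2A_{p,N}(\cV,\cV)^2$, which combines with the $+A_{p,N}(\cV,\cV)^2$ to leave $-A_{p,N}(\cV,\cV)^2$, while the first term averages to $L_{p,N}(\cV,\cV)$.

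Next I would substitute the explicit values. From Lemma~\ref{LemmaAverageTwoVertices}, $A_{p,N}(\cV,\cV) = \tfrac{1}{2}(1 - p^{-1} - p^{-2})$, so squaring and expanding gives
\begin{equation*}
    A_{p,N}(\cV,\cV)^2 = \sdfrac{1}{4}\big(1 - 2p^{-1} - p^{-2} + 2p^{-3} + p^{-4}\big).
\end{equation*}
To put $L_{p,N}(\cV,\cV) - A_{p,N}(\cV,\cV)^2$ over a common denominator $4(p-1)$, I would rewrite $A_{p,N}(\cV,\cV)^2$ as
\begin{equation*}
    A_{p,N}(\cV,\cV)^2 = \sdfrac{1}{4(p-1)}(p-1)\big(1 - 2p^{-1} - p^{-2} + 2p^{-3} + p^{-4}\big),
\end{equation*}
expand the product $(p-1)(1 - 2p^{-1} - p^{-2} + 2p^{-3} + p^{-4}) = p - 3 + p^{-1} + 3p^{-2} - p^{-3} - p^{-4}$, and subtract this from the numerator $p-2+p^{-1}+2p^{-2}-5p^{-3}-4p^{-4}$ appearing in $L_{p,N}(\cV,\cV)$. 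The difference collapses to $1 - p^{-2} - 4p^{-3} - 3p^{-4}$, yielding the desired closed form~\eqref{eqMomentMVV}.

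There is no substantive obstacle here: both $A_{p,N}(\cV,\cV)$ and $L_{p,N}(\cV,\cV)$ are already known in closed form, so the only work is the bookkeeping of the polynomial identity in $p^{-1}$. The one point worth watching is sign and coefficient tracking when expanding $(p-1)(1 - 2p^{-1} - p^{-2} + 2p^{-3} + p^{-4})$, since an error there would propagate to all lower-order terms of the final expression; double-checking by evaluating both sides at, say, $p=3$ or $p=5$ gives a quick sanity check.
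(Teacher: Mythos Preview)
Your proposal is correct and follows exactly the paper's approach: expand the square to obtain the variance identity $M_{p,N}(\cV,\cV)=L_{p,N}(\cV,\cV)-A_{p,N}(\cV,\cV)^2$, then substitute the closed forms from Lemmas~\ref{LemmaAverageTwoVertices} and~\ref{LemmaMomentLVV}. You have in fact carried out the algebraic subtraction in more detail than the paper, which simply cites the two inputs and asserts the result.
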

\begin{proof}
By the definitions~\eqref{eqMomentMVV},~\eqref{eqMomentLVV}, and~\eqref{eqAverageVV}, we have:
\begin{align*}
   M_{p,N}(\cV,\cV)
   & := \frac{1}{\#\cV^2}
     \sum_{\alpha \in\cV}\sum_{\beta \in\cV}
     \distance_{p,N}^4(\alpha,\beta)
     -2 A_{p,N}(\cV,\cV)
     \frac{1}{\#\cV^2}
     \sum_{\alpha \in\cV}\sum_{\beta \in\cV}
     \distance_{p,N}^2(\alpha,\beta)
     + \big( A_{p,N}(\cV,\cV) \big)^2\\
  &\ = L_{p,N}(\cV,\cV) - \big( A_{p,N}(\cV,\cV) \big)^2.
\end{align*}
Then, formula~\eqref{eqMomentMVV} follows by inserting here the exact expressions given in~\eqref{eqMomentLVV} and~\eqref{eqAverageVV}.
This completes the proof of Lemma~\ref{LemmaMomentMVV}.
\end{proof}

\section{Proofs of the theorems}\label{SectionProofsOfTheorems}
The key result we aim to prove is that
nearly all the points in 
$\cV(p,N)$ are almost equally spaced as $p$ becomes sufficiently large.
Then, following similar steps, the same result can be shown to hold in $\scrB(p,N)$.
Next, as a consequence it will follow that almost all
triangles 
with vertices in $\cV(p,N)$ are almost equilateral.

To gain insight into the phenomenon, let us examine a closer more intuitive context.
As we called \textit{super-regular} a $K$-polytope 
with all its sides and diagonals almost equal,
in the case $K=2$, it is elementary that this happens with equality, since
all $2$-polytopes $AB$ meet the requirement 
\mbox{$[AB]=[BA]$} in most spaces.

\begin{figure}[htb]
 \centering
 \hfill
    \includegraphics[width=0.48\textwidth]{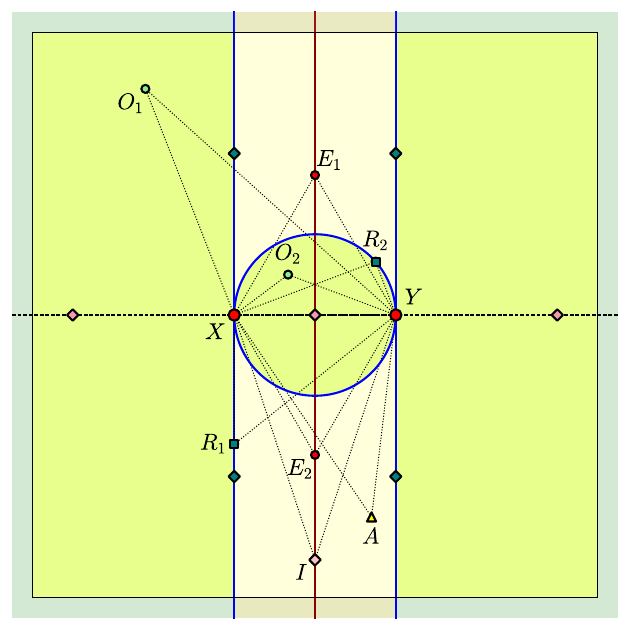}
\hfill\mbox{}
\caption{Distinguished types of triangles $\triangle (*XY)$, where
points $X$ and $Y$ are placed at 
a distance~$D$ from each other in a central symmetric position of 
the square 
$\Box{\smash[t]{\strut}}_{L}$
of side $L>D$, and $*$ stands for any point in 
$\Box{\smash[t]{\strut}}_{L}$.
} 
\label{FigurePointPositions}
 \end{figure}
If $K=3$, in the real plane and Euclidean distance, there are many equilateral triangles,
but their number pales in comparison to the multitude of other
shapes.
In Figure~\ref{FigurePointPositions},
two points $X$ and $Y$ situated at a distance $D$ apart are fixed 
in a square with side length $L$, 
while a third point $P$ has a random position within the square.
Then, the probability of 
$\triangle(PXY)$ being obtuse is $1-DL^{-1}+\pi D^2L^{-2}/4$, 
the probability of $\triangle(PXY)$ being acute is
$DL^{-1}-\pi D^2L^{-2}/4$, and, for all other types, 
the probability of a triangle being degenerate, rectangular, or isosceles is~$0$, 
although $P$ has infinitely many positions that could satisfy them.
Meanwhile, equilateral triangles are just two.
Then, even when allowing the sides to be almost equal in a certain limit 
$\epsilon$-$\delta$ process, the probability of randomly choosing three points 
to form a super-regular triangle is zero. 
This problem of counting or measuring the size of the set of
particular distinct geometric shapes has been explored as seen in a variety of works
by 
Hall~\cite{Hal1982}, 
Buchta~\cite{Buc1986}, 
Guy~\cite{Guy1993} 
Eisenberg and Sullivan~\cite{ES1996} 
Dunbar~\cite{Dun1997}, 
Mathai, Moschopoulos and Pederzoli~\cite{MMP1999}, 
Li and Qiu~\cite{LQ2016}, 
Burgstaller and Pillichshammer~\cite{BP2009}, 
Bat-Ochir~\cite{Bat2019}, 
B\"asel~\cite{Bas2021}. 

If $K=4$, it is impossible to find super-regular quadrilaterals 
in $\RR^2$, except for those that have all vertices nearly coincident. 
However, moving into three dimensions, we do find non-trivial quadrilaterals 
that have all six sides nearly equal, namely those that are close to regular tetrahedra. 
In higher dimensions, the surprising fact is that super-regular 
$K$-polytopes not only exist, but those that are not super-regular 
are in minuscule minority (see~\cite{ACZ2023,ACZ2024}),
and next we will see that the same holds in cyclotomic fields.

\subsection{Common distance from a point in the box to the vertices}

Let the integer $N\ge 1$ and the odd prime $p$ be fixed.
For a parameter $U>0$, the exact value of which we will determine later, we consider the set $\cU=\cU_{p,N}(U)\subset\cV(p,N)$ defined by
\begin{align*}
    \cU:= \Big\{
    x\in\cV(p,N) : \Big\vert\distance_{p,N}(\alpha,x)-\sqrt{A_{p,N}(\alpha,\cV)}\Big\vert\ge \sdfrac{1}{U}
    \Big\}\,.
\end{align*}

For every integer $N\ge 1$, every prime $p$
and every $\alpha\in\scrB(p,N)$,
by Lemma~\ref{LemmaAverage} we find that the 
average normalized distance from~$\alpha$ to $\cV=\cV(p,N)$ is bounded by
\begin{align}\label{eqBoundAverage}
   \sdfrac{5}{36}
   \le \sdfrac{1}{4}\big(1-p^{-1}-p^{-2}\big)
   \le A_{p,N}(\alpha,\cV) 
   \le 1.
\end{align}
Then, following~\eqref{eqMbound} we have:
\begin{equation}\label{eqMlb1}
  \begin{split}
 \sdfrac{3}{p}
 &\ge \frac{1}{\#\cV}
     \sum_{x \in\cV}
     \big(\distance_{p,N}^2(\alpha,x)-A_{p,N}(\alpha,\cV)\big)^2
 \ge \frac{1}{\#\cV}
      \sum_{x \in\cV\bigcap \cU}
     \big(\distance_{p,N}^2(\alpha,x)-A_{p,N}(\alpha,\cV)\big)^2.
  \end{split}
\end{equation}
Since
\begin{align*}
  \sqrt{A_{p,N}(\alpha,\cV)}
    &\Big\vert\distance_{p,N}(\alpha,x)-\sqrt{A_{p,N}(\alpha,\cV)}\Big\vert\\
    &\le
  \Big(\distance_{p,N}(\alpha,x)+\sqrt{A_{p,N}(\alpha,\cV)}\Big)
  \cdot
  \Big\vert\distance_{p,N}(\alpha,x)-\sqrt{A_{p,N}(\alpha,\cV)}\Big\vert\\
  & =
  \Big\vert\distance^2_{p,N}(\alpha,x)-A_{p,N}(\alpha,\cV)\Big\vert,
\end{align*}
the right hand-side of the estimate~\eqref{eqMlb1} 
can be further lowered, so that we obtain
\begin{equation}\label{eqMlb2}
  \begin{split}
 \sdfrac{3}{p}
 &\ge \frac{1}{\#\cV}
      \sum_{x \in \cU}
      A_{p,N}(\alpha,\cV)
     \Big(\distance_{p,N}(\alpha,x)-
     \sqrt{A_{p,N}(\alpha,\cV)}\Big)^2
 \ge \frac{1}{\#\cV}
 \cdot\#\cU\cdot A_{p,N}(\alpha,\cV) \cdot \sdfrac{1}{U^2}.
  \end{split}
\end{equation}
On combining~\eqref{eqBoundAverage} and~\eqref{eqMlb2}
we find that
\begin{align*}
   \frac{1}{\#\cV(p,N)} \cdot\#\cU
   <  22\cdot \sdfrac{U^2}{p}.
\end{align*}
Letting now $U=p^{\eta}$ for some $\eta$ that satisfies
$0<\eta< 1/2$, we obtain the following result.
\begin{theorem}\label{Theorem4}
Let $A_{p,N}(\alpha,\cV)$ be the average of the squares of
the normalized distances from~$\alpha$ to 
$\cV=\cV(p,N)$, which is
the set of vertices of the cyclotomic box $\scrB(p,N)$.
Let $\eta\in (0,1/2)$ be fixed.
Then for all integers $N\ge 1$, for 
all odd primes $p$, and for all $\alpha\in\scrB(p,N)$, we have 
\begin{equation*}\label{eqPTV1}
  \sdfrac{1}{\#\cV(p,N)} 
  \#\Big\{
    x\in\cV(p,N) : 
    \distance_{p,N}(\alpha,x) \in
    \Big[\sqrt{A_{p,N}(\alpha,\cV)}-\sdfrac{1}{p^{\eta}},
    \sqrt{A_{p,N}(\alpha,\cV)}+\sdfrac{1}{p^{\eta}}
        \Big]
    \Big\}
    \ge 1- \sdfrac{22}{p^{1-2\eta}}\,.
\end{equation*}
\end{theorem}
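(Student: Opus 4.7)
This is a Chebyshev-type statement, so the plan is to deduce it by combining the second-moment bound $M_{p,N}(\alpha,\cV)\le 3/p$ from~\eqref{eqMbound} (itself a consequence of Lemma~\ref{Lemma2Moment}) with the lower bound on $A_{p,N}(\alpha,\cV)$ coming from Lemma~\ref{LemmaAverage}. The only subtlety is that Lemma~\ref{Lemma2Moment} controls the second moment of the \emph{squared} deviation $\distance^2_{p,N}(\alpha,x)-A_{p,N}(\alpha,\cV)$, whereas the theorem asks about the \emph{linear} deviation $\distance_{p,N}(\alpha,x)-\sqrt{A_{p,N}(\alpha,\cV)}$. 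Bridging these two is the only real step.

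First I would fix a parameter $U>0$ (to be taken $U=p^{\eta}$ at the end) and introduce the exceptional set
\[
\cU := \Big\{ x \in \cV(p,N) : \big| \distance_{p,N}(\alpha,x) - \sqrt{A_{p,N}(\alpha,\cV)}\,\big| \ge \sdfrac{1}{U} \Big\}.
\]
The bridge between linear and squared deviation is the identity
\[
\distance^2_{p,N}(\alpha,x) - A_{p,N}(\alpha,\cV)
= \Big(\distance_{p,N}(\alpha,x) - \sqrt{A_{p,N}(\alpha,\cV)}\Big)\Big(\distance_{p,N}(\alpha,x) + \sqrt{A_{p,N}(\alpha,\cV)}\Big),
\]
from which the trivial lower bound $\distance_{p,N}(\alpha,x)+\sqrt{A_{p,N}(\alpha,\cV)}\ge \sqrt{A_{p,N}(\alpha,\cV)}$ gives, for every $x\in\cU$, that $(\distance^2_{p,N}(\alpha,x)-A_{p,N}(\alpha,\cV))^2\ge A_{p,N}(\alpha,\cV)/U^2$.

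Restricting the sum defining $M_{p,N}(\alpha,\cV)$ to $x\in\cU$ and using~\eqref{eqMbound} then yields
\[
\sdfrac{3}{p} \;\ge\; M_{p,N}(\alpha,\cV) \;\ge\; \sdfrac{\#\cU}{\#\cV(p,N)}\cdot A_{p,N}(\alpha,\cV)\cdot \sdfrac{1}{U^2}.
\]
Finally, the lower bound $A_{p,N}(\alpha,\cV)\ge \tfrac{1}{4}(1-p^{-1}-p^{-2})\ge 5/36$ for all odd primes $p\ge 3$ (from Lemma~\ref{LemmaAverage}) gives, after rearrangement, $\#\cU/\#\cV(p,N) \le 22\,U^2/p$; taking $U=p^{\eta}$ with $\eta\in(0,1/2)$ produces the advertised bound $22/p^{1-2\eta}$. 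The argument presents no real obstacle once Lemmas~\ref{LemmaAverage} and~\ref{Lemma2Moment} are in hand; the only place requiring attention is the factorization step above, which is precisely why the lower bound on $A_{p,N}(\alpha,\cV)$ furnished by Lemma~\ref{LemmaAverage}, rather than just its finiteness, is essential.
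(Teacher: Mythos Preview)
Your proposal is correct and follows essentially the same route as the paper: define the exceptional set $\cU$ with parameter $U$, use the factorization $\distance^2-A=(\distance-\sqrt{A})(\distance+\sqrt{A})$ together with $\distance+\sqrt{A}\ge\sqrt{A}$ to pass from linear to squared deviation, invoke the variance bound~\eqref{eqMbound} and the lower bound $A_{p,N}(\alpha,\cV)\ge 5/36$ from Lemma~\ref{LemmaAverage}, and finish by taking $U=p^{\eta}$. The numerical constant $22$ arises exactly as you compute, via $3\cdot 36/5=21.6<22$.
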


\subsection{Isosceles triangles with bases in the set of vertices}\label{SectionIsosceles}
For any $\alpha\in\scrB(p,N)$, we let
$\cT=\cT_{p,N}(\alpha,\cV)$ denote the set of triangles 
$\triangle = (\alpha, \beta_1,\beta_2)$
with vertices $\alpha$ and $\beta_1,\beta_2\in\cV = \cV(p,N)$.
Then
\begin{align*}
 &\sdfrac{1}{\#\cV^2} 
 \#\Big\{\triangle\in\cT
     : \distance_{p,N}(\alpha,\beta_1),
     \distance_{p,N}(\alpha,\beta_2)
     \in \Big[\sqrt{A_{p,N}(\alpha,\cV)}-\sdfrac{1}{p^\eta},
     \sqrt{A_{p,N}(\alpha,\cV)}+\sdfrac{1}{p^\eta}\Big]
    \Big\}\\
    \ge & 
  \sdfrac{1}{\#\cV}
  \#\Big\{\beta_1\in\cV
     : \distance_{p,N}(\alpha,\beta_1)
     \in \Big[\sqrt{A_{p,N}(\alpha,\cV)}-\sdfrac{1}{p^\eta},
     \sqrt{A_{p,N}(\alpha,\cV)}+\sdfrac{1}{p^\eta}\Big]
    \Big\}\\
    &\times
    \sdfrac{1}{\#\cV}
  \#\Big\{\beta_2\in\cV
     : \distance_{p,N}(\alpha,\beta_2)
     \in \Big[\sqrt{A_{p,N}(\alpha,\cV)}-\sdfrac{1}{p^\eta},
     \sqrt{A_{p,N}(\alpha,\cV)}+\sdfrac{1}{p^\eta}\Big]
    \Big\}\,.
\end{align*}
Employing the estimate in Theorem~\ref{Theorem4}, 
we further find that the above is larger than:
\begin{align*}
    \Big( 1- \sdfrac{22}{p^{1-2\eta}}\Big)^2
    \ge  1- \sdfrac{44}{p^{1-2\eta}}.  
\end{align*}
This shows that as $p$ becomes large, almost all triangles $\triangle = (\alpha, \beta_1,\beta_2)$
with  $\beta_1$ and $\beta_2$ vertices of $\scrB(p,N)$ are almost isosceles in shape.
\begin{theorem}\label{TheoremIsoscelesTriangles}
Let $\alpha\in\scrB(p,N)$ and denote by
$\cT_{p,N}(\alpha,\cV)$ the set of triangles 
$\triangle = (\alpha, \beta_1,\beta_2)$
with two vertices $\beta_1,\beta_2\in\cV(p,N)$,
the set of vertices of $\scrB(p,N)$,
and the third at  $\alpha$.
Let~$A_{p,N}(\alpha,\cV)$ be the average of the squares of the normalized distances from~$\alpha$ to~$\cV(p,N)$.
Let $\eta\in(0,1/2)$ be fixed. 
Let $\cI_{p,N}(\eta)$ denote the neighborhood interval of
$\sqrt{A_{p,N}(\alpha,\cV)}$:
\begin{align*}
   \cI_{p,N}(\eta) : = 
   \Big[\sqrt{A_{p,N}(\alpha,\cV)}-\sdfrac{1}{p^\eta},
     \sqrt{A_{p,N}(\alpha,\cV)}+\sdfrac{1}{p^\eta}\Big]\,.
\end{align*}
Then for any integer $N\ge1$, any odd prime $p$, 
and any $\alpha\in\scrB(p,N)$, we have
\begin{align*}
 &\sdfrac{1}{\#\cV(p,N)^2} 
 \#\big\{(\alpha, \beta_1,\beta_2)\in\cT_{p,N}(\alpha,\cV)
     : \distance_{p,N}(\alpha,\beta_1),
     \distance_{p,N}(\alpha,\beta_2)
     \in \cI_{p,N}(\eta)
    \big\}
   \ge  1- \sdfrac{44}{p^{1-2\eta}}\,.
\end{align*}
\end{theorem}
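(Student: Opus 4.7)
The plan is to reduce the triangle statement to the single-vertex estimate of Theorem~\ref{Theorem4} by a product/independence argument. Fix $\alpha\in\scrB(p,N)$ and $\eta\in(0,1/2)$, and define the ``good'' set
\begin{equation*}
    \cG := \Big\{\beta\in\cV(p,N) : \distance_{p,N}(\alpha,\beta)\in \cI_{p,N}(\eta)\Big\}.
\end{equation*}
Theorem~\ref{Theorem4}, which is already proved from the second-moment bound in Lemma~\ref{Lemma2Moment} via Chebyshev-type reasoning, tells us precisely that
\begin{equation*}
    \#\cG \ge \left(1-\sdfrac{22}{p^{1-2\eta}}\right)\#\cV(p,N).
\end{equation*}

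Next I would observe that a triangle $(\alpha,\beta_1,\beta_2)\in\cT_{p,N}(\alpha,\cV)$ satisfies both membership conditions $\distance_{p,N}(\alpha,\beta_1),\distance_{p,N}(\alpha,\beta_2)\in\cI_{p,N}(\eta)$ if and only if $(\beta_1,\beta_2)\in\cG\times\cG$. Since the two coordinates of the pair $(\beta_1,\beta_2)$ range independently over $\cV(p,N)$, the number of favourable triangles is exactly $(\#\cG)^2$, so the proportion is $(\#\cG/\#\cV(p,N))^2$. Substituting the lower bound for $\#\cG$ yields the proportion at least
\begin{equation*}
   \left(1-\sdfrac{22}{p^{1-2\eta}}\right)^{\!2} \ge 1-\sdfrac{44}{p^{1-2\eta}},
\end{equation*}
where the last inequality uses $(1-x)^2\ge 1-2x$ for all real $x$.

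There is essentially no obstacle here, since all the analytic work, namely the exact second-moment formula in Lemma~\ref{Lemma2Moment} and the Chebyshev-style deduction packaged as Theorem~\ref{Theorem4}, has already been done upstream. The only care needed is to phrase the event cleanly as a product event on $\cV(p,N)\times\cV(p,N)$, so that the single-vertex concentration bound can be squared without any correlation correction. In particular, no appeal to the triangle inequality or to the apex $\alpha$ being special is required; the estimate is uniform in $\alpha\in\scrB(p,N)$ because the bound in Theorem~\ref{Theorem4} is uniform in $\alpha$. This uniformity is exactly what later permits Corollaries such as~\ref{CorollaryKeps} and~\ref{CorollaryPyramids} to be obtained by iterating the same product argument over $K$ vertices instead of two.
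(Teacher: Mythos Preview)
Your proposal is correct and follows essentially the same approach as the paper: both reduce the triangle count to the product of two single-vertex events, invoke Theorem~\ref{Theorem4} to bound each factor by $1-22/p^{1-2\eta}$, and then use $(1-x)^2\ge 1-2x$ to obtain the final bound. Your phrasing via the good set $\cG$ and the exact identification of the favourable set with $\cG\times\cG$ is in fact slightly cleaner than the paper's inequality chain, but the argument is the same.
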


Note that in particular Theorem~\ref{TheoremIsoscelesTriangles}
shows that almost all lateral faces of almost all 
pyramids whose polytopal bases have vertices in $\cV(p,N)$
are almost isosceles triangles, 
thereby proving Theorem~\ref{TheoremIsosceles} and half of Corollary~\ref{CorollaryPyramids}.

\subsection{Triangles with a side that joins the center to a vertex.}
Although there are many similarities, there are also differences between the super-regularity properties in high dimensional hypercubes in Euclidean spaces~\cite{ACZ2023, ACZ2024} compared to those in cyclotomic fields.
One such difference is that in the Euclidean hypercube
$\cW=[0,N]^d\cap\NN^d$, the distance between the center
$\bc =\big(\frac{N}{2},\dots,\frac{N}{2}\big)$ 
and a vertex of $\cW$ is the same for any vertex, while
in the cyclotomic box $\scrB(p,N)$ the center, which is the origin $O$ in the cyclotomic case, 
is not equidistant from the vertices.
Indeed, if  $\beta=b_1\omega+\cdots+b_{p-1}\omega^{p-1}$
is a vertex of $\scrB(p,N)$, 
then $b_1,\dots,b_{p-1}\in\{-N,N\}$, so that
\begin{align*}
  \nnorm{\bbeta}^2_E=(p-1)N^2\ \ \text{ and }\ \ 
 0\le \Tr(\beta)^2\le (p-1)^2N^2.
\end{align*}
Then, since $\nnorm{\beta}^2=p^2\nnorm{\bbeta}^2_E-(p+1)\Tr(\beta)^2$, the distance $d(O,\beta)$ satisfies the following inequalities:
\begin{align}\label{eqDistanceOVInequalities}
(p-1)N^2\le 
d(O,\beta)^2\le (p-1) p^2N^2.
\end{align}
Additionally, the values of $d(O,\beta)^2$
are spread out over the entire range determined by the inequalities above, while the $(p-1)$-tuple of the coefficients of $\beta$
run over $\{-N,N\}^{p-1}$.
And yet this does not contradict Theorem~\ref{Theorem4}, because in fact almost all $(p-1)$-tuples are almost \textit{balanced}, having approximately the same number of components equal to both $-N$ and~$N$.
This is due to the fact that the binomial coefficients near the middle overwhelmingly dominate the sum of the others (see the proof and the application discussed in~\mbox{\cite[Theorem 5]{BCZ2024}}).
\begin{remark}\label{RemarkNeglijableTrace}
Let us note that this means that the inequality on the right side 
of~\eqref{eqDistanceOVInequalities} approaches equality for almost all vertices
and that in measuring distances the influence of the trace
becomes increasingly negligible as $p$ gets larger.
\end{remark}

We turn now to the problem of estimating 
the size of the center angle 
$\widehat{\alpha O \beta}$, where 
$\alpha\in\scrB(p,N)$ is fixed, and~\mbox{$\beta\in\cV(p,N)$}.
From Theorem~\ref{Theorem4}
we know that if $\alpha\in\scrB(p,N)$ is fixed, 
then for every $\eta\in (0,1/2)$
almost all vertices $\beta\in\cV(p,N)$ satisfy
\begin{align*}
  \distance_{p,N}^2(\alpha,\beta) 
    &= A_{p,N}(\alpha,\cV) +
    E'_{\alpha}\big(p^{-\eta}\big),
\end{align*}
where $\big|E'_{\alpha}(t)\big|\le 3t$ for $0<t<1$.
Then, employing formula~\eqref{eqAverageLA} for the
average yields:
\begin{equation}\label{eqdAab}
  \begin{split}
  \distance_{p,N}^2(\alpha,\beta) 
      &=  \distance_{p,N}^2(O,\alpha)
      +\sdfrac{1}{4}\Big(1-\sdfrac{1}{p} -\sdfrac{1}{p^2} \Big)
      + E'_{\alpha}\big(p^{-\eta}\big)\\
       &=  \distance_{p,N}^2(O,\alpha)+\sdfrac{1}{4}
      + E''_{\alpha}\big(p^{-\eta}\big),
  \end{split}
\end{equation}
where $\big|E''_{\alpha}(t)\big|\le 4t$ for $0<t<1$.
In particular, if $\alpha$ is the origin
and $\beta$ is a vertex, we have
\begin{equation}\label{eqdAOb}
  \begin{split}
  \distance_{p,N}^2(O,\beta)
       &= \sdfrac{1}{4}
      + E''_{O}\big(p^{-\eta}\big).
  \end{split}
\end{equation}
Then by substituting~\eqref{eqdAab} and~\eqref{eqdAOb} 
into the law of cosines, we obtain:
\begin{align*}
  |\cos \widehat{(\alpha O\beta)}|
  &= \sdfrac{1}{2}
  \big( \distance_{p,N}^2(O,\alpha) 
  + \distance_{p,N}^2(O,\beta)
  - \distance_{p,N}^2(\alpha,\beta)\big) 
  \distance_{p,N}^{-1}(O,\alpha)
  \distance_{p,N}^{-1}(O,\beta)
  \\
  &\le \sdfrac{1}{2}\big(
  \big|E''_{O}\big(p^{-\eta}\big| + 
    \big|E''_{\alpha}\big(p^{-\eta}\big|)
  \big) 
  \cdot \distance_{p,N}^{-1}(O,\alpha)
  \Big(\sdfrac{1}{4}+E''_{O}\big(p^{-\eta}\big)\Big)^{-1/2}\\
  &\le C_3 p^{-\eta}  \distance_{p,N}^{-1}(O,\alpha),
\end{align*}
where $C_3>0$ is an absolute constant.
Now if $\alpha\in\scrB(p,N)$ is not too close to $O$, so that it satisfies condition 
$\distance_{p,N}(O,\alpha)\ge p^{\gamma-\eta}$ for some $\gamma>0$, it follows
\begin{align*}
  |\cos \widehat{(\alpha O \beta)}|
  &\le C_3 p^{-\gamma}.
\end{align*}
However, note that $\gamma$ cannot be taken too large, 
because following~\eqref{eqDistanceOVInequalities} we know that
the normalized distance from from $O$ to $\alpha$ cannot 
be larger than $1/2$. Therefore $\gamma$ needs to satisfy condition $p^{\gamma-\eta}\le 1/2$.

The conclusion reached in this way, 
the precise formulation of which 
will follow, shows us in particular that almost all angles 
$\widehat{\alpha O \beta}$ with $\beta\in\cV(p,N)$ 
are almost right angles, as stated 
in Theorem~\ref{TheoremAngle}.
\begin{theorem}\label{TheoremRightAngles}
Let $\eta\in(0,1/2)$ be fixed. 
Suppose $p\ge 2^{1/\eta}$ is an odd prime and let 
\begin{align*}
   \gamma\in\Big(0,\,\eta-\sdfrac{\log 2}{\log p}\Big] \,.
\end{align*}
Additionally, for any integer $N\geq1$, 
let $\alpha\in\scrB(p,N)$ satisfying condition
$\distance_{p,N}(O,\alpha)\ge p^{\gamma-\eta}$
be fixed.
Then, there exist absolute constants $C_3>0$
and $C_4>0$
such that
\begin{equation*}
    \frac{1}{\#\cV(p,N)}\#\left\{ \beta\in\cV(p,N) : \big|\cos(\widehat{\alpha O\beta})\big| 
    \le C_3p^{-\gamma} \right\}
        \ge 1-\sdfrac{C_4}{p^{1-2\eta}}.
\end{equation*}
\end{theorem}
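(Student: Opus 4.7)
The plan is to formalize the heuristic already sketched in equations~\eqref{eqdAab}--\eqref{eqdAOb}. The key observation is that when the near-average values for $\distance_{p,N}^2(\alpha,\beta)$ and $\distance_{p,N}^2(O,\beta)$ are inserted into the law of cosines for the triangle $\triangle(O,\alpha,\beta)$, the dominant $\distance_{p,N}^2(O,\alpha)$ and $\tfrac{1}{4}$ contributions cancel. What remains in the numerator is $O(p^{-\eta})$, which divided by the two-factor denominator kept bounded below by hypothesis yields the desired $O(p^{-\gamma})$ bound.

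Concretely, I would first apply Theorem~\ref{Theorem4} twice with the same exponent $\eta$: once to the pair $(\alpha,\cV)$ and once to $(O,\cV)$. Denoting the two resulting ``good'' sets by $\cG_\alpha$ and $\cG_O$, each has cardinality at least $(1-22/p^{1-2\eta})\#\cV(p,N)$, so the intersection $\cG:=\cG_\alpha\cap\cG_O$ contains at least $(1-44/p^{1-2\eta})\#\cV(p,N)$ vertices. This already handles the density statement with $C_4=44$. For each $\beta\in\cG$, squaring the defining near-equalities and applying Lemma~\ref{LemmaAverage} to evaluate the averages produces exactly the identities~\eqref{eqdAab} and~\eqref{eqdAOb} with $|E''_\alpha(p^{-\eta})|, |E''_O(p^{-\eta})|\le 4 p^{-\eta}$.

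Substituting these into the law of cosines then gives
\begin{equation*}
2\,\distance_{p,N}(O,\alpha)\,\distance_{p,N}(O,\beta)\,\cos(\widehat{\alpha O\beta})
\,=\, E''_O(p^{-\eta})-E''_\alpha(p^{-\eta}),
\end{equation*}
with absolute value at most $8 p^{-\eta}$. The hypothesis $\distance_{p,N}(O,\alpha)\ge p^{\gamma-\eta}$ controls the first factor of the denominator, while $\distance_{p,N}^2(O,\beta)=\tfrac{1}{4}+E''_O(p^{-\eta})$ stays bounded away from $0$ by an absolute positive constant $c_0>0$ once $p^\eta$ is large enough. Combining these yields $|\cos(\widehat{\alpha O\beta})| \le C_3\, p^{-\eta}/p^{\gamma-\eta}=C_3\,p^{-\gamma}$, as required.

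The main point of delicacy is matching the upper bound $\gamma\le \eta-\log 2/\log p$ against the threshold $p\ge 2^{1/\eta}$. The bound on $\gamma$ forces $p^{\gamma-\eta}\le 1/2$, which is precisely what keeps the hypothesis $\distance_{p,N}(O,\alpha)\ge p^{\gamma-\eta}$ compatible with the natural upper bound $\distance_{p,N}(O,\alpha)\le 1/2$ arising from the central symmetry of $\scrB(p,N)$ and the diameter normalization. For the finitely many primes $p$ too small for $|E''_O(p^{-\eta})|$ to be comfortably dominated by $\tfrac14$, the density inequality is vacuous and those cases can be absorbed into $C_4$.
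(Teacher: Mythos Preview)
Your proposal is correct and follows essentially the same approach as the paper's own argument in Section~5.3: apply Theorem~\ref{Theorem4} once for $\alpha$ and once for $O$, intersect the two good sets to get the density bound, and then use the law of cosines with the cancellation between~\eqref{eqdAab} and~\eqref{eqdAOb} together with the lower bound hypothesis on $\distance_{p,N}(O,\alpha)$. You are slightly more explicit than the paper about the intersection step and about absorbing the small primes into $C_4$, but the method is the same.
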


\subsection{Super-regularity of distances between vertices}

For an integer $N\ge 1$, an odd prime~$p$,
and a parameter $W>0$, whose exact value will be specified later, we consider the set $\cW=\cW_{p,N}(W)\subset\cV(p,N)^2$ defined by
\begin{align*}
    \cW:= \Big\{
    (\beta_1, \beta_2)\in\cV(p,N)^2 : \Big\vert\distance_{p,N}(\beta_1, \beta_2)-\sqrt{A_{p,N}(\cV,\cV)}\Big\vert\ge \sdfrac{1}{W}
    \Big\}\,.
\end{align*}

From Lemma~\ref{LemmaAverageTwoVertices} we know the following bounds
for the average of the normalized distance between any
two vertices of~$\scrB(p,N)$:
\begin{align}\label{eqBoundAverageVV}
   \sdfrac{5}{18}
   \le \sdfrac{1}{2}\big(1-p^{-1}-p^{-2}\big)
   = A_{p,N}(\cV,\cV) 
   \le 1.
\end{align}
Then, the exact formula~\eqref{eqMomentMVV} implies:
\begin{equation}\label{eqMlb1VV}
  \begin{split}
  \frac{ 1}{4(p-1)}
   \big(1-p^{-2}-4p^{-3}-3p^{-4}\big)
  &= \frac{1}{\#\cV^2}
     \sum_{(\beta_1,\beta_2)\in\cV^2}
     \big(\distance_{p,N}^2(\beta_1,\beta_2)-A_{p,N}(\cV,\cV)\big)^2\\
 &\ge \frac{1}{\#\cV^2}
      \sum_{(\beta_1,\beta_2) \in\cV^2\bigcap \cW}
     \big(\distance_{p,N}^2(\beta_1,\beta_2)-A_{p,N}(\alpha,\cV)\big)^2.
  \end{split}
\end{equation}
Note that
\begin{align*}
  \sqrt{A_{p,N}(\cV,\cV)}
    &\Big\vert\distance_{p,N}(\beta_1,\beta_2)-\sqrt{A_{p,N}(\cV,\cV)}\Big\vert\\
    &\le
  \Big(\distance_{p,N}(\beta_1,\beta_2)+\sqrt{A_{p,N}(\cV,\cV)}\Big)
  \cdot
  \Big\vert\distance_{p,N}(\beta_1,\beta_2)-\sqrt{A_{p,N}(\cV,\cV)}\Big\vert\\
  & =
  \Big\vert\distance^2_{p,N}(\beta_1,\beta_2)-A_{p,N}(\cV,\cV)\Big\vert.
\end{align*}
Then~\eqref{eqMlb1VV} implies
\begin{equation}\label{eqMlb2VV}
  \begin{split}
  \frac{ 1}{4(p-1)}
   \big(1-p^{-2}-4p^{-3}-3p^{-4}\big)
  &\ge \frac{1}{\#\cV^2}
     \sum_{(\beta_1,\beta_2)\in\cW}
     \big(\distance_{p,N}^2(\beta_1,\beta_2)-A_{p,N}(\cV,\cV)\big)^2\\
  & \ge \frac{1}{\#\cV^2}
 \cdot\#\cW\cdot A_{p,N}(\cV,\cV) \cdot \sdfrac{1}{W^2}.
  \end{split}
\end{equation}
Next, using the bound~\eqref{eqBoundAverageVV} in~\eqref{eqMlb2VV}
it follows
\begin{align*}
   \frac{1}{\#\cV(p,N)^2} \cdot\#\cW
   \le  \sdfrac{29}{30}\cdot \sdfrac{W^2}{p-1}
   \le \sdfrac{2}{p}W^2
\end{align*}
for all odd primes $p$.
In the next theorem we write the conclusion that follows
letting $W=p^{\eta}$ for some $\eta\in (0,1/2)$.
\begin{theorem}\label{Theorem5}
Let $A_{p,N}(\cV,\cV)$ be the average of the squares of
the normalized distances between any two vertices  
in $\cV=\cV(p,N)$.
Let $\eta\in (0,1/2)$ be fixed.
Then, for all integers~\mbox{$N\ge 1$} and for 
all odd primes $p$, we have 
\begin{equation*}\label{eqPTVV}
  \sdfrac{1}{\#\cV^2} 
  \#\Big\{
    (\beta_1,\beta_2)\in\cV^2 : 
    \distance_{p,N}(\beta_1,\beta_2) \in
    \Big[\sqrt{A_{p,N}(\cV,\cV)}-\sdfrac{1}{p^{\eta}},
    \sqrt{A_{p,N}(\cV,\cV)}+\sdfrac{1}{p^{\eta}}
        \Big]
    \Big\}
    \ge 1- \sdfrac{2}{p^{1-2\eta}}\,.
\end{equation*}
\end{theorem}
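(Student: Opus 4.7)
The plan is to apply a Chebyshev-type inequality, using the exact second-moment formula from Lemma~\ref{LemmaMomentMVV} together with the average from Lemma~\ref{LemmaAverageTwoVertices}. The argument parallels the structure used in deriving Theorem~\ref{Theorem4} from Lemma~\ref{Lemma2Moment}, now replacing the roles of the fixed point $\alpha$ and the moment-about-mean sum over $\cV$ by a double sum over $\cV\times\cV$.

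First, I would fix a parameter $W>0$ (to be chosen as $W=p^\eta$ at the end) and introduce the exceptional set
\begin{equation*}
\cW := \Big\{(\beta_1,\beta_2)\in\cV^2 : \big|\distance_{p,N}(\beta_1,\beta_2)-\sqrt{A_{p,N}(\cV,\cV)}\big|\ge \tfrac{1}{W}\Big\}.
\end{equation*}
The claim reduces to proving that $\#\cW/\#\cV^2 \le 2p^{-1}W^2$. From Lemma~\ref{LemmaAverageTwoVertices}, for every odd prime $p$ one has $A_{p,N}(\cV,\cV)\ge \tfrac{5}{18}$, hence $\sqrt{A_{p,N}(\cV,\cV)}$ is bounded below by an absolute positive constant. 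This is the key ingredient that allows squared distances to be compared with distances themselves.

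Next, I would exploit the elementary algebraic identity
\begin{equation*}
\big|\distance_{p,N}^2(\beta_1,\beta_2)-A_{p,N}(\cV,\cV)\big| = \big(\distance_{p,N}(\beta_1,\beta_2)+\sqrt{A_{p,N}(\cV,\cV)}\big)\cdot\big|\distance_{p,N}(\beta_1,\beta_2)-\sqrt{A_{p,N}(\cV,\cV)}\big|,
\end{equation*}
which, upon dropping the first factor down to $\sqrt{A_{p,N}(\cV,\cV)}$, gives the one-sided bound
\begin{equation*}
\sqrt{A_{p,N}(\cV,\cV)}\cdot\big|\distance_{p,N}(\beta_1,\beta_2)-\sqrt{A_{p,N}(\cV,\cV)}\big|\le \big|\distance_{p,N}^2(\beta_1,\beta_2)-A_{p,N}(\cV,\cV)\big|.
\end{equation*}
Squaring and summing over pairs in $\cW$ only, and combining with the exact evaluation
\begin{equation*}
M_{p,N}(\cV,\cV) = \sdfrac{1}{4(p-1)}\big(1-p^{-2}-4p^{-3}-3p^{-4}\big)
\end{equation*}
from Lemma~\ref{LemmaMomentMVV}, yields a Chebyshev-type inequality
\begin{equation*}
\sdfrac{\#\cW}{\#\cV^2}\cdot A_{p,N}(\cV,\cV)\cdot\sdfrac{1}{W^2}\le M_{p,N}(\cV,\cV).
\end{equation*}
Using $A_{p,N}(\cV,\cV)\ge 5/18$ and the elementary bound $M_{p,N}(\cV,\cV)\le 1/(4(p-1))\le 2/(5p)$ for odd primes (say $p\ge 3$), I would extract $\#\cW/\#\cV^2 \le 2p^{-1}W^2$ with a small absolute constant. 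Finally, setting $W=p^\eta$ for $\eta\in(0,1/2)$ gives the stated bound $1-2p^{-(1-2\eta)}$ on the complement.

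There is no serious obstacle: the entire proof is a textbook second-moment (Chebyshev) argument, and all the analytic work has been absorbed into Lemmas~\ref{LemmaAverageTwoVertices} and~\ref{LemmaMomentMVV}. The only point requiring care is the bookkeeping of numerical constants, in particular verifying that the lower bound on $A_{p,N}(\cV,\cV)$ combines with the explicit value of $M_{p,N}(\cV,\cV)$ to yield exactly the constant $2$ (and not something larger) in the final estimate; this simply requires checking that $\tfrac{1}{A_{p,N}(\cV,\cV)}\cdot\tfrac{1}{4(p-1)}\cdot p \le 2$ for all odd primes $p$, which is a direct numerical verification.
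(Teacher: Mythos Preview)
Your proposal is correct and follows essentially the same approach as the paper: define the exceptional set $\cW$, use the algebraic identity to pass from $|\distance^2-A|$ to $|\distance-\sqrt{A}|$, apply Chebyshev's inequality with the exact second moment from Lemma~\ref{LemmaMomentMVV} and the lower bound $A_{p,N}(\cV,\cV)\ge 5/18$ from Lemma~\ref{LemmaAverageTwoVertices}, and then set $W=p^\eta$. The numerical bookkeeping you outline (yielding $\tfrac{36}{25}\,W^2/p\le 2W^2/p$) is in fact slightly sharper than the paper's intermediate constant $\tfrac{29}{30}\cdot\tfrac{W^2}{p-1}$, but both land on the same final bound.
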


\subsection{Super-regular polytopes - proof of Theorem~\ref{TheoremMaineps}}\label{subsectionKpolytopes}

Following Lemma~\ref{LemmaAverageTwoVertices}, that gives
the expresion  
$A_{p,N}(\cV,\cV)= 1/2 -p^{-1}/2-p^{-2}/2$, 
we can also proceed as described in the proof of Theorem~\ref{Theorem5} to determine the count of the 
pairs where 
\begin{align*}
  \distance_{p,N}(\beta_1,\beta_2) \in
    \Big[\sdfrac{1}{\sqrt{2}}-\sdfrac{1}{T},
    \sdfrac{1}{\sqrt{2}}+\sdfrac{1}{T}
        \Big]\,.
\end{align*}
Subsequently, we can adapt by iteratively applying the arguments from Section~\ref{SectionIsosceles}
multiple times on pairs of 
$(\beta_1,\beta_2)\in\cV(p,N)^2$ belonging to linked groups 
that are vertices of polytopes.
By doing so, the only drawback is that the error term 
grows proportionally to the number of applications.
Noting that the number of edges of a $K$-polytope is 
$\binom{K}{2}$, it follows that if $K\ge 2$, almost all 
$K$-polytopes with vertices in $\cV(p,N)$ are super-regular as $p$ tends to infinity.
The precise result is stated as the following theorem.

\begin{theorem}\label{TheoremKVV}
There exist absolute positive constants $C_5$ and $C_6$ 
such that for any prime $p\ge 3$, any integer $N\ge1$, 
any integer $K\ge 2$, and
any real number $T$ such that 
\begin{equation}\label{eqTmin2}
  \begin{split}
   & 1<T\le  C_5p,
  \end{split}  
\end{equation}
we have
\begin{equation}\label{eqTheoremKVV}
  \begin{split}
     	\sdfrac{1}{\#\cV(p,N)^{K}}   
    \#\Big\{(\beta_1,\dots,\beta_K)\in\cV(p,N)^K : &
        \max_{1\le j<k\le K}\Big|\distance_{p,N}(\beta_j,\beta_k)-\sdfrac{1}{\sqrt{2}}\Big|
 \le \sdfrac{1}{T}\Big\}        \\
               &\ge  1-\sdfrac{C_6K(K-1)T^2}{p}\,. 
  \end{split}
\end{equation}
\end{theorem}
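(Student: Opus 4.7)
The plan is to combine the second-moment bound of Lemma~\ref{LemmaMomentMVV} with a Chebyshev-type inequality to handle the case $K=2$ with the target value $1/\sqrt{2}$ in place of $\sqrt{A_{p,N}(\cV,\cV)}$, and then to take a union bound over the $\binom{K}{2}$ pairs within each $K$-polytope. This is essentially the strategy flagged by the authors in the paragraph preceding the theorem statement, but with the pair-to-polytope passage carried out directly by inclusion-exclusion rather than iteratively.

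First I would exploit Lemma~\ref{LemmaAverageTwoVertices}, which gives
\[
\sdfrac{1}{2} - A_{p,N}(\cV,\cV) = \sdfrac{1}{2}\big(p^{-1}+p^{-2}\big) \le \sdfrac{1}{p}\,.
\]
Writing $d=\distance_{p,N}(\beta_1,\beta_2)$, the algebraic identity
\[
d^2 - A_{p,N}(\cV,\cV) = \big(d - \sdfrac{1}{\sqrt{2}}\big)\big(d + \sdfrac{1}{\sqrt{2}}\big) + \big(\sdfrac{1}{2} - A_{p,N}(\cV,\cV)\big),
\]
combined with the trivial bound $d+1/\sqrt{2}\ge 1/\sqrt{2}$, yields
\[
\big|d - \sdfrac{1}{\sqrt{2}}\big| \le \sqrt{2}\,\big|d^2 - A_{p,N}(\cV,\cV)\big| + \sdfrac{\sqrt{2}}{p}\,.
\]
Choosing $C_5>0$ small enough that the condition $T\le C_5 p$ forces $\sqrt{2}/p\le 1/(2T)$, the event $|d-1/\sqrt{2}|>1/T$ then forces $|d^2 - A_{p,N}(\cV,\cV)|>1/(2\sqrt{2}\,T)$.

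Second, applying Markov's inequality to the second-moment estimate of Lemma~\ref{LemmaMomentMVV}, which satisfies $M_{p,N}(\cV,\cV)\le 1/(4(p-1))$, gives
\[
\sdfrac{1}{\#\cV^2}\#\Big\{(\beta_1,\beta_2)\in\cV^2 : \big|\distance_{p,N}(\beta_1,\beta_2) - \sdfrac{1}{\sqrt{2}}\big| > \sdfrac{1}{T}\Big\} \le \sdfrac{8T^2}{4(p-1)} = \sdfrac{2T^2}{p-1}\,,
\]
which is the $K=2$ case of~\eqref{eqTheoremKVV}. The extension to arbitrary $K\ge 2$ is then a direct union bound: since the $K$ vertices are sampled independently and uniformly from $\cV$, the proportion of $K$-tuples for which at least one of the $\binom{K}{2}$ pairwise distances deviates from $1/\sqrt{2}$ by more than $1/T$ is at most $\binom{K}{2}\cdot 2T^2/(p-1) = K(K-1)T^2/(p-1)$, which is bounded by $C_6 K(K-1) T^2/p$ for a suitable absolute constant $C_6$.

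The main obstacle is the first step, namely the conversion from the natural centered quantity $|d^2-A_{p,N}(\cV,\cV)|$ supplied by the moment calculation to the desired deviation $|d-1/\sqrt{2}|$ from the nominal target. Both the multiplicative factor $d+1/\sqrt{2}$ and the additive shift $1/2-A_{p,N}(\cV,\cV) = O(1/p)$ must be absorbed simultaneously. It is precisely this additive shift that renders the hypothesis $T\le C_5 p$ genuinely restrictive: if $T$ were allowed to grow faster than $p$, the shift between $\sqrt{A_{p,N}(\cV,\cV)}$ and $1/\sqrt{2}$ would dominate the tolerance $1/T$, and no analogue of Theorem~\ref{Theorem5} with target $1/\sqrt{2}$ could be maintained.
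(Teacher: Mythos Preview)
Your proposal is correct and follows essentially the same approach as the paper: both combine the second-moment bound of Lemma~\ref{LemmaMomentMVV} with a Chebyshev-type inequality to control the proportion of bad pairs, absorb the shift from $\sqrt{A_{p,N}(\cV,\cV)}$ to $1/\sqrt{2}$ using the explicit formula of Lemma~\ref{LemmaAverageTwoVertices} together with the hypothesis $T\le C_5 p$, and then pass to $K$-tuples by a union bound over the $\binom{K}{2}$ pairs. Your algebraic conversion from $|d^2-A_{p,N}(\cV,\cV)|$ to $|d-1/\sqrt{2}|$ is carried out somewhat more explicitly than the paper's sketch, but the underlying argument is the same.
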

We remark that for a non-trivial result in~\eqref{eqTheoremKVV} we need to take $T\ll p^{1/2}/K$.
Finally, note that Corollary~\ref{CorollaryKeps}, 
as well as Theorem~\ref{TheoremMaineps},
follows from Theorem~\ref{TheoremKVV}
in the same way as  Theorems~\ref{TheoremIsosceles} and~\ref{TheoremAngle} follow
from Theorems~\ref{TheoremIsoscelesTriangles} and~\ref{TheoremRightAngles}, by rephrasing with $\varepsilon$ instead of $1/T$.
Let us also observe that by establishing the almost regularity of nearly all polygons with vertices in set $\cV(p,N)$, 
Theorem~\ref{TheoremKVV} also concludes the proof of Corollary~\ref{CorollaryPyramids} concerning the regularity of pyramids
$(\alpha,\beta_1, \dots, \beta_K)$ with $\alpha\in\scrB(p,N)$ and
$\beta_1, \dots, \beta_K\in\cV(p,N)$.

\vspace{11mm}



\vspace{26pt} 

\end{document}